\definecolor{darkred}{rgb}{0.8,0,0} 
\definecolor{darkgreen}{rgb}{0,0.6,0}
\definecolor{darkblue}{rgb}{0,0,0.8}
\newtheorem{thm}{Theorem}[section]
\newtheorem{lemma}[thm]{Lemma}
\newtheorem{prop}[thm]{Proposition}
\newtheorem{cor}[thm]{Corollary}
\theoremstyle{definition}
\newtheorem{definition}[thm]{Definition}
\theoremstyle{remark}
\newtheorem{rmk}[thm]{Remark}
\newcommand{\eps}{\epsilon}
\newcommand{\R}{\mathbb{R}}
\def    \CF     {\operatorname{CF}}\def    \HF     {\operatorname{HF}}
\def    \CF     {\operatorname{CF}}
\newcommand{\CZ}{\text{CZ}}
\newcommand{\SDM}{symplectically degenerate maximum}
\newcommand{\SDMs}{symplectically degenerate maxima}
\numberwithin{equation}{section}
\begin{document}

\title[Conley conjecture for irrational symplectic manifolds]{The Conley conjecture for irrational symplectic manifolds}

\author[Doris Hein]{Doris Hein}

\address{Department of Mathematics, UC Santa Cruz,
Santa Cruz, CA 95064, USA}
\email{dhein@ucsc.edu}

\date{\today}
\maketitle

\textbf{Abstract:}
We prove a generalization of the Conley conjecture: Every Hamiltonian diffeomorphism of a closed symplectic manifold has infinitely many periodic orbits if the first Chern class vanishes over the second fundamental group. In particular, this removes the rationality condition from similar theorems by Ginzburg and G\"urel.
The proof in the irrational case involves several new ingredients including the definition and the properties of the filtered Floer homology for Hamiltonians on irrational manifolds. 
We also develop a method of localizing the filtered Floer homology for short action intervals using a direct sum decomposition, where one of the summands only depends on the behavior of the Hamiltonian in a fixed open set.

\tableofcontents
\newpage
\section{Introduction}
\label{sec:intro}
\subsection{Main results}
\label{sec:Conley}

In this paper we prove that a Hamiltonian difffeomorphism of a symplectic manifold has infinitely many periodic orbits if the first Chern class of the manifold vanishes over $\pi_2(M)$.
This is a generalization of a conjecture Conley stated in 1984 in \cite{Co} for $M=T^{2n}$.
This conjecture was proved for the weakly non-degenerate Hamiltonian diffeomorphisms of tori in \cite{CZ} and of symplectically aspherical manifolds in \cite{SZ}. In \cite{FrHa}, the conjecture was proved for all Hamiltonian diffeomorphisms of surfaces other than $S^2$. In its original form, as stated 
in \cite{Co} for $M = T^{2n}$, the conjecture was established in \cite{Hi} and the case 
of an arbitrary closed, symplectically aspherical manifold was settled in \cite{Gi}. This proof was extended to symplectically rational manifolds $M$ with $c_1(M)|_{\pi_2(M)}=0$ in~\cite{GG-gaps}.

The main result of this paper is the following theorem:

\begin{thm} [Conley Conjecture, the irrational case]
Let $(M^{2n},\omega)$ be a closed symplectic manifold with 
$c_1(M)|_{\pi_2(M)}=0$ and let $\varphi$ be a Hamiltonian diffeomorphism on $M$ such that the fixed points of $\varphi$ are isolated.
Then $\varphi$ has simple periodic orbits of arbitrarily large period.
\label{ConlConj}
\end{thm}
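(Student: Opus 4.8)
The plan is to argue by contradiction through the mechanism of a \emph{symplectically degenerate maximum} (SDM): assuming that $\varphi$ has only finitely many simple periodic orbits, I would first produce an SDM (possibly of some iterate of $\varphi$), and then show that an SDM cannot coexist with a finite set of periodic orbits. Fix a Hamiltonian $H$ with $\varphi = \varphi_H^1$. The hypothesis $c_1(M)|_{\pi_2(M)} = 0$ guarantees that the mean index $\Delta(\bar x)$ of a capped $1$-periodic orbit $\bar x = (x,u)$ is independent of the capping $u$, hence a well-defined real number attached to each periodic orbit; by contrast the action $\mathcal A_H(\bar x)$ still changes under recapping by the subgroup $\omega(\pi_2(M)) \subset \R$, which for an irrational $M$ is dense in $\R$. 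That density is the entire source of difficulty here: it forces one to build filtered Floer homology and continuation maps for possibly degenerate Hamiltonians over a Novikov field on a dense group, and it destroys the arithmetic bookkeeping available in the rational case.

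\emph{Locating an SDM.} The total Floer homology $\HF(\varphi)$ is isomorphic to $H_*(M)$ over the relevant Novikov field, so it is nonzero in the top degree $n = \tfrac12\dim M$; let $\bar x$ be a capped $1$-periodic orbit carrying the fundamental class, so that $\HF^{\mathrm{loc}}_n(\bar x) \neq 0$. If $\Delta(\bar x) = 0$, then $\bar x$ is an SDM by the homological characterization of symplectically degenerate maxima. If $\Delta(\bar x) \neq 0$, I would pass to the iterates $\varphi^k$: under the finiteness assumption their periodic orbits are exactly the iterates of the finitely many simple orbits of $\varphi$, a carrier $\bar y_k$ of $[M]$ for $\varphi^k$ again has $\HF^{\mathrm{loc}}_n(\bar y_k) \neq 0$, and since the local Floer homology of a capped orbit $\bar y$ is supported in degrees in $[\Delta(\bar y) - n, \Delta(\bar y) + n]$, we get $\Delta(\bar y_k) \in [0, 2n]$. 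As there are only finitely many simple orbits and the mean index of the $m$-th iterate of a simple orbit $y$ equals $m\,\Delta(y)$, a pigeonhole argument over $k$ yields a simple orbit $y$ with $\Delta(y) = 0$ some iterate of which carries $[M]$, and that iterate is then an SDM. (One must check along the way that iterates of a carrier of the fundamental class remain carriers, i.e.\ keep nonzero local Floer homology in degree $n$ — this is where the good iteration behavior of local maxima, as opposed to other critical-type behaviors, is used.)

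\emph{An SDM forces infinitely many periodic orbits.} This is the crux and where the new technology enters. Let $\bar x$ be an SDM of $H$, with $\Delta(\bar x) = 0$ and action $c := \mathcal A_H(\bar x)$; on the iterated level $c_k := \mathcal A_{H^{\natural k}}(\bar x^k) = k c$, while $\Delta(\bar x^k) = 0$ and $\HF^{\mathrm{loc}}_n(\bar x^k) \neq 0$ for every $k$. The goal is to exhibit, for an infinite set of $k$, a $1$-periodic orbit of $\varphi^k$ that is not an iterate of a lower-period orbit, contradicting finiteness. In the rational case one works with action windows shorter than the rationality constant $\lambda_0 > 0$: in such a window each periodic orbit appears with at most one capping, so only a bounded number of capped orbits occur, and a nonzero degree-$n$ class in $\HF^{(c_k - \epsilon,\, c_k + \epsilon)}$ concentrated near $\bar x^k$ is then visibly carried by a single new orbit. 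For irrational $M$ one has $\lambda_0 = 0$ and this collapses. Instead I would use the direct sum decomposition of filtered Floer homology developed for exactly this purpose: for a short enough interval $I$ — short relative to $H$ and to a fixed small ball $U$ around $x$, but of length bounded \emph{below} independently of $k$ — the filtered homology $\HF^{I}(H^{\natural k})$ splits as $A_k \oplus B_k$ with $A_k$ depending only on $H^{\natural k}|_U$. The SDM property makes $A_k$ nonzero in degree $n$ for $I$ a short window around $c_k$, no matter how complicated $B_k$ and the dense spectrum feeding it may be; since $A_k$ is carried by orbits meeting $U$, a crossing-energy estimate that confines these orbits to $U$ and, together with the finiteness assumption, prevents them from all being iterates of the fixed bounded collection of known orbits, produces simple periodic orbits of arbitrarily large period.

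The main obstacle is the technical package underlying the last step: constructing filtered Floer homology for degenerate Hamiltonians on an irrational manifold with a filtration that survives Novikov completion, and establishing the direct sum decomposition together with its naturality under continuation maps and its compatibility with iteration. The reason the rational proof does not transfer is precisely that over a dense action group the filtered complexes are not finitely generated in any useful range and the ``one capping per short window'' dichotomy disappears, so the localization of the relevant Floer class near the SDM must be extracted from the geometry of the neighborhood $U$ rather than from the arithmetic of the periods of $\omega$.
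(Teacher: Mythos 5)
Your proposal follows essentially the same route as the paper: reduce to the presence of a symplectically degenerate maximum via mean-index/local-Floer-homology considerations (using that $c_1(M)|_{\pi_2(M)}=0$ makes the mean index capping-independent), and then handle the SDM case by a filtered Floer homology for degenerate Hamiltonians on irrational manifolds, localized near the SDM through a direct sum decomposition backed by energy bounds for Floer trajectories crossing a fixed shell. The only cosmetic difference is that the paper phrases the conclusion of the SDM step as nonvanishing of $\HF_{n+1}$ in a window $(kc+\delta_k,\,kc+\epsilon)$ and uses the resulting mean-index bound $1\leq\Delta\leq 2n+1$ to rule out iterated orbits, which is the precise form of your ``new orbit near the SDM'' argument.
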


In particular, this theorem implies all the results mentioned above. It also implies that on such manifolds any Hamiltonian diffeomorphism has infinitely many periodic orbits as there are infinitely many one-periodic orbits or we have simple periodic orbits of arbitrarily large period.

\begin{rmk}
The requirements on the manifold for theorems of this type have been relaxed more and more in the last years. However, the example of an irrational rotation on $S^2$ shows that the restrictions on the symplectic manifold cannot be completely eliminated.
\end{rmk}

As in \cite{GG-gaps}, it suffices to prove the result in the presence of a \SDM.

\begin{definition}
\label{SDM}
An isolated capped $k$-periodic orbit $\bar{x}$ of a $k$-periodic Hamiltonian $H$ is called a
 \textit{\SDM} of $H$ if \[\Delta_H(\bar{x})=0 \text{ and } \HF_n(H,\bar{x})\neq 0.\]
\end{definition}

In this definition, we denote by $\HF_n(H,\bar{x})$ the local Floer homology of $H$ at $\bar{x}$ and let $\Delta_H(\bar{x})$ be the mean index of $\bar{x}$.
We refer to~\cite{GG-loc} for details on the local Floer homology and to \cite{SZ} for the definition of the mean index.

This definition was first used in \cite{Hi} when the concept of \SDMs\ was introduced. It was explicitly stated and further investigated in \cite{Gi,GG-loc}. See also Proposition~\ref{SDM geo} for the geometric description of a \SDM.

\begin{thm}[Degenerate Conley Conjecture, the irrational case]
Let $(M^{2n},\omega)$ be a closed symplectic manifold with 
$c_1(M)|_{\pi_2(M)}=0$ and let $H$ be a Hamiltonian on $M$ such that the fixed points of $\varphi_H$ are isolated.
Assume, in addition, that $H$ has a \SDM.
Then the Hamiltonian diffeomorphism $\varphi_H$ generated by $H$ has geometrically distinct periodic orbits of arbitrarily large period.
\label{degConlConj}
\end{thm}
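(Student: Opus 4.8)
The plan is to argue by contradiction, following the strategy of \cite{Hi, Gi, GG-gaps} but with the rationality-dependent steps replaced by the irrational filtered Floer homology and the localization developed in the earlier sections. Suppose, to the contrary, that the simple periodic orbits of $\varphi_H$ have bounded period; since the fixed-point set of each iterate $\varphi_H^k$ is isolated, hence finite, there are then only finitely many simple periodic orbits, say $z_1,\dots,z_m$, all of period $\le N_0$, and every periodic orbit of $\varphi_H$ is an iterate of one of the $z_i$. After replacing $H$ by a suitable iterate we may assume the SDM $\bar x$ is a $1$-periodic capped orbit of the $1$-periodic Hamiltonian $H$, say $x=z_1$; write $c=\mathcal A_H(\bar x)$ and let $H^{\natural k}$ denote the $k$-periodic iterate of $H$. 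By Proposition~\ref{SDM geo}, $x$ is a constant orbit which is a local maximum of $H_t$ for every $t$, and in a fixed Darboux ball $B$ centred at $x$ the Hamiltonian $H$ is $C^\infty$-close to the degenerate-maximum model of \cite{Hi, GG-loc}. Shrinking $B$, we arrange $B\cap z_i=\emptyset$ for $i\ge 2$; since an iterate of a simple orbit has the same image as that orbit, the only periodic orbits of $\varphi_H$ whose image meets $B$ are then the iterates $x^\ell$ of $x$. Finally recall that each iterate $\bar x^\ell$ is again a symplectically degenerate maximum: $\Delta(\bar x^\ell)=0$, $\mathcal A(\bar x^\ell)=\ell c$, and the local Floer homology $\HF_*(H^{\natural\ell},\bar x^\ell)$ is concentrated in degree $n$.

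Next I would localize the filtered Floer homology of $H^{\natural k}$ to the ball $B$ in a short action window just below $kc$. Using the crossing energy estimates one obtains a direct-sum decomposition
$$\HF^{(kc-\delta_k,\ kc)}\bigl(H^{\natural k}\bigr)\;=\;V_k\oplus W_k,$$
in which $V_k$ is generated by the capped periodic orbits carried by $B$ whose action lies in the window; on an irrational $M$ the localization theorem guarantees that this summand $V_k$ is nonetheless finite-dimensional and, crucially, that it depends only on the restriction $H|_B$ — neither of which is available for the full group $\HF^{(a,b)}$, whose action spectrum is dense. Since the only periodic orbits of $\varphi_H$ whose image meets $B$ are the iterates $x^\ell$, and each $\bar x^\ell$ contributes to the local Floer homology only in degree $n$, it follows that $(V_k)_{n+1}=0$ for every $k$.

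On the other hand, since $V_k$ depends only on $H|_B$ and $H|_B$ is $C^\infty$-close to the degenerate-maximum model, $V_k$ can be computed from that model, which is precisely the situation analysed by Hingston. The broadening of the top critical level of a degenerate maximum produces, for suitable arbitrarily large $k$ and a choice of $\delta_k\to 0$, a nontrivial homology class one degree above that of the maximum at an action level just below $kc$; that is, $(V_k)_{n+1}\ne 0$ for arbitrarily large $k$. This contradicts the vanishing $(V_k)_{n+1}=0$ obtained above, and the contradiction shows that $\varphi_H$ must have simple periodic orbits of arbitrarily large period.

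I expect the real work, and the genuinely new content, to be the foundations invoked in the second and third paragraphs: constructing a filtered Floer homology for Hamiltonians on an irrational $M$ that behaves well over short action intervals, establishing the direct-sum decomposition whose $B$-summand sees only the behaviour of the Hamiltonian in the fixed open set $B$, and proving the crossing energy estimates in this generality. On an irrational manifold the action spectrum is dense, so one has neither finite-dimensionality of $\HF^{(a,b)}$ nor the possibility of pinning a homology class near $x$ by its action value alone, and it is exactly these two features that the localization restores. Granting these tools together with the normal form of Proposition~\ref{SDM geo}, the remaining bookkeeping — persistence of the SDM under iteration, concentration of its local homology in degree $n$, and Hingston's model computation — is essentially as in \cite{GG-gaps, GG-loc}.
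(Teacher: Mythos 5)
Your overall strategy (contradiction, SDM persistence under iteration, a nonvanishing Floer-homological statement in degree $n+1$ near action $kc$) is in the right family, but the decisive step is not available as you state it: you apply the direct sum decomposition to $H^{(k)}$ itself and assert that the summand $V_k$ is an invariant of $H|_B$ alone. Proposition~\ref{hom decomp} requires the Hamiltonian to be autonomous, without periodic orbits, on a shell bounded by its \emph{own} level sets; a general $H$ near the symplectically degenerate maximum admits no such shell, and even when the decomposition does apply, the summand $\HF_*(K,U)$ is only comparable across Hamiltonians via monotone homotopy maps that respect the splitting, which is established only when the Hamiltonians agree up to a constant on the shell (Remark~\ref{compatible}). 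This is precisely why the paper argues by squeezing: it constructs $H_-\le H\le H_+$ (and $F$) with a controlled linear piece of slope $\alpha_0/k$ on $V\setminus U$, performs the localized computation for $H_\pm^{(k)}$ and $F^{(k)}$, and concludes only that the \emph{full} filtered group $\HF_{n+1}^{(kc+\delta_k,\,kc+\epsilon)}(H^{(k)})$ is nonzero, because the monotone homotopy map $\HF(H_+^{(k)})\to\HF(H_-^{(k)})$ factors through it (Theorem~\ref{theorem}); note also that the relevant window lies just \emph{above} $kc$, not below it.

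There is a second, logical, problem: your two evaluations of $(V_k)_{n+1}$ (zero because only recapping-free iterates of $x$ lie in $B$ and their local homology sits in degree $n$; nonzero by the Hingston-type model computation) would contradict each other using nothing but the choice of $B$, which shows one of them must be false --- namely the second. The degree-$(n+1)$ ``broadening'' class is carried by the localized homology of the comparison Hamiltonians $H_\pm$, not by any localized invariant of $H$ itself; for $H$ one gets only nonvanishing of the full filtered group, and the whole mechanism is that this class must then be represented by an orbit \emph{not} near $x$ (or not an iterate at all). Consequently your proposal omits the step that actually produces the contradiction and the place where $c_1(M)|_{\pi_2(M)}=0$ is used: in the paper, $\HF_{n+1}^{(kc+\delta_k,\,kc+\epsilon)}(H^{(k)})\neq 0$ yields a capped $k$-periodic orbit $\bar y_k$ with $1\le \Delta_{H^{(k)}}(\bar y_k)\le 2n+1$; since $c_1$ vanishes on $\pi_2(M)$ the mean index is independent of the capping, the nonzero mean indices of the finitely many one-periodic orbits grow linearly under iteration while zero mean indices stay zero, so for a sufficiently large prime $k$ no iterated orbit can have mean index in $[1,2n+1]$. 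That index argument (rather than a degree count inside a localized summand of $H$) is what rules out iterated orbits and completes the proof; without it, or an equivalent substitute, your argument does not close.
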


\begin{proof}[Proof of Theorem \ref{ConlConj}]
If there is no symplectically degenerate maximum, all one-periodic orbits have non-zero mean index. As the mean index is independent of cappings and grows linearly with iteration, the support of local Floer homology is shifting away from the interval $\left[0,2n\right]$, i.e. the local Floer homology in those degrees eventually becomes 0. The theorem follows then by a standard argument as in~\cite{Gi,GG-gaps,Hi,SZ}.
If there is a symplectically degenerate maximum, the theorem follows from Theorem \ref{degConlConj}.
\end{proof}

The proof of Theorem \ref{degConlConj} is based on a Floer theoretical argument establishing

\begin{thm}
\label{theorem}
Let $(M,\omega)$ be weakly monotone and closed.
Assume that $H$ has a \SDM\ at $\bar{x}$ with
$\mathcal{A}_H(\bar{x})=c$. 
Then  for every sufficiently small $\epsilon >0$ there exists some $k_{\epsilon}$ such that
\[
\HF_{n+1}^{(kc+\delta_k,\,kc+\epsilon)}(H^{(k)})\neq 0 \text{  for every } k>k_{\epsilon} \text{ and some } \delta_k\in(0,\epsilon).
\]
\end{thm}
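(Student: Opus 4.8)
The plan is to adapt the argument used for rational manifolds in~\cite{Gi,GG-gaps} (which goes back to Hingston~\cite{Hi}) to the localized setting, the new ingredient being the direct sum decomposition of the filtered Floer homology over short action windows. Since $(M,\omega)$ is weakly monotone, the filtered Floer homology below is the classical one, so the analytic difficulties of the fully irrational case do not arise here.

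First I would use the geometric description of a \SDM\ (Proposition~\ref{SDM geo}) to fix a Darboux ball $U$ around $x$ together with, for each large $k$, autonomous model Hamiltonians squeezing $H^{(k)}|_U$ between $k$-fold rescalings of fixed bumps with a degenerate maximum whose top value is $kc$, attained only at $x$; here one also uses the persistence of the \SDM\ under iteration from~\cite{GG-loc}, namely $\Delta_{H^{(k)}}(\bar x^k)=0$ and $\HF_n(H^{(k)},\bar x^k)\cong\HF_n(H,\bar x)\neq0$ for all $k$. The mechanism driving the theorem is that these model Hamiltonians steepen as $k\to\infty$, so that for large $k$ the (still isolated) maximum at $x$ is encircled by short periodic orbits contained in $U$; an index and action computation as in~\cite{Gi,Hi} shows that, among these, there are orbits contributing local Floer homology in degree $n+1$ whose actions converge to $kc$ from above.

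Next I would invoke the localization. For a short window $J$ just above $kc$ the direct sum decomposition gives $\HF^{J}_*(H^{(k)})=A_k^{J}\oplus B_k^{J}$ with $A_k^{J}$ depending only on $H^{(k)}|_U$. Applying this to $J_1=(kc-\epsilon',\,kc+\delta_k)$, $J_2=(kc+\delta_k,\,kc+\epsilon)$ and $J_3=(kc-\epsilon',\,kc+\epsilon)$, all short, and using that the decomposition is compatible with the long exact sequence relating these three windows, it suffices to prove
\[
(A_k^{J_2})_{n+1}\neq0\quad\text{for a suitable }\delta_k\in(0,\epsilon)\text{ and all }k>k_\epsilon.
\]
Since $A_k^{J_i}$ depends only on $H^{(k)}|_U$, this is a computation in the autonomous model: the window $J_1$ sees only $\bar x^k$, so $(A_k^{J_1})_n\cong\HF_n(H^{(k)},\bar x^k)\neq0$; choosing $\delta_k>0$ just below the actions of the newly created orbits, the window $J_2$ sees exactly those orbits; and, following the model computation of~\cite{Gi,Hi} carried out now for the localized summand, one shows that the nonzero class of $\bar x^k$ in $(A_k^{J_1})_n$ does not survive to $(A_k^{J_3})_n$. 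By the long exact sequence it then lies in the image of the connecting map $(A_k^{J_2})_{n+1}\to(A_k^{J_1})_n$, whence $(A_k^{J_2})_{n+1}\neq0$.

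The step I expect to be the main obstacle is the interface between this model computation and the localization. For a naively fixed ball $U$ the encircling orbits cross $\partial U$ as the model Hamiltonian is steepened, so the invariant set in $U$ fails to be isolated throughout the homotopy and the usual local Floer homology argument does not apply directly; using the direct sum decomposition in place of the local Floer homology of an isolated invariant set is precisely what makes the reduction to the model legitimate, and one must verify that the decomposition is preserved under the monotone homotopies and long exact sequences used above and that $A_k^{J}$ genuinely isolates the contribution of the orbits in $U$. The remaining quantitative work — already present in~\cite{Gi,Hi} — is to make the index and action estimates for the encircling orbits uniform in $k$, so that a single $\epsilon$ (hence a single $k_\epsilon$) works for all $k>k_\epsilon$, and to extract the value $\delta_k\in(0,\epsilon)$ separating the action $kc$ of $\bar x^k$ from those of the new orbits.
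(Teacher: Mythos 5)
There is a genuine gap, and it sits exactly where your argument departs from the squeezing scheme. You apply the direct sum decomposition $\HF^{J}_*(H^{(k)})=A_k^{J}\oplus B_k^{J}$ to $H^{(k)}$ itself and then declare the computation of $A_k^{J}$ to be ``a computation in the autonomous model.'' Neither step is justified. Proposition~\ref{hom decomp} requires the sets $U\subset V$ to be bounded by level sets of the Hamiltonian and the Hamiltonian to be \emph{autonomous and without one-periodic orbits} on the shell $\bar V\setminus U$; an arbitrary (time-dependent, degenerate) $H$ with a \SDM\ satisfies none of this, and the paper never applies the decomposition to $H$ — only to the specially constructed Hamiltonians $H_\pm$ and $F$, which are built to be autonomous and linear (up to constants, hence Remark~\ref{compatible} applies) on the shell. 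Moreover, even granting a decomposition for $H^{(k)}$, the fact that $A_k^{J}$ depends only on $H^{(k)}|_U$ does not make it computable: $H^{(k)}|_U$ is not the model bump; squeezing $H$ between models only produces monotone homotopy maps, and on an irrational manifold these are merely homomorphisms, not isomorphisms, so statements such as ``$(A_k^{J_1})_n\cong\HF_n(H^{(k)},\bar x^k)$'' and ``the class of $\bar x^k$ does not survive to $J_3$'' are not model computations at all — they are assertions about the (limit-defined) filtered Floer homology of $H$ itself, which is precisely what one cannot compute and what the theorem is designed to avoid. Your opening claim that weak monotonicity makes the filtered homology ``the classical one'' is also off the mark: the difficulty in the irrational case is not transversality but that the action spectrum may be dense, so $\HF^{(a,b)}_*(H^{(k)})$ for degenerate $H$ must be defined as a direct limit over non-degenerate perturbations $K\geq H$ (Section~\ref{sec:filtHF}), and any proof has to go through that definition.

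The paper's route keeps all computations on the model side: after using Proposition~\ref{SDM geo} to reduce to a constant \SDM\ $p$ with small Hessian, one constructs $H_-\leq H\leq H_+$ (and an auxiliary $F$ with isospectral homotopy $F^s=G^s\# F$) agreeing up to constants on $V\setminus U$, applies the decomposition and the aspherical computations of \cite{Gi,GG1} to the localized summands $\HF_{n+1}^{(kc+\delta_k,kc+\epsilon)}(\cdot,U)$ of $H_+^{(k)}$, $F^{(k)}$, $H_-^{(k)}$, and shows via the commutative diagram that the monotone homotopy map $\HF_{n+1}^{(kc+\delta_k,kc+\epsilon)}(H_+^{(k)})\to\HF_{n+1}^{(kc+\delta_k,kc+\epsilon)}(H_-^{(k)})$ is non-zero; since this map factors through $\HF_{n+1}^{(kc+\delta_k,kc+\epsilon)}(H^{(k)})$, the latter group is non-trivial, with no need to identify or localize it. If you want to rescue your long-exact-sequence ``death of the \SDM\ class'' variant, you would have to run it entirely for $H_\pm$ and $F$ and still transfer the conclusion to $H$ through the factorization of monotone maps — at which point you have essentially reconstructed the paper's argument.
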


Here $H^{(k)}$ denotes the one-periodic Hamiltonian $H$ viewed as $k$-periodic function for some integer $k$, see also Section~\ref{sec:sympl-Ham}.
This theorem implies Theorem~\ref{degConlConj} and thus also Theorem~\ref{ConlConj}.

\begin{proof}[Proof of Theorem \ref{degConlConj}]
Arguing by contradiction, we assume that for every sufficiently large period all periodic orbits are iterated and let $k$ be a sufficiently large prime. Then every $k$-periodic orbit is an iterated one-periodic orbit. By Theorem~\ref{theorem}, there exists a capped $k$-periodic orbit $\bar{y}_k$ with

\[
1 \leq \Delta_{H^{(k)}}(\bar{y}_k) \leq 2n+1.
\]
We only have finitely many one-periodic orbits and $c_1(M)|_{\pi_2(M)}=0$. The finitely many non-zero mean indices grow linearly with the order of iteration and thus for sufficiently large $k$ the indices of the iterations shift away from the interval $\left[1,\,2n+1\right]$. The iterations of the orbits with mean index zero have also mean index zero.
Thus the orbit $\bar{y}_k$ cannot be an iterated one-periodic orbit in contradiction to the choice of $k$. 
\end{proof}

The proof of Theorem~\ref{theorem} builds on a generalization of the methods from the proof of the rational case in \cite{GG-gaps}. 
We define filtered Floer homology for symplectically irrational manifolds and prove that this homology can be localized for sufficiently small action intervals.
The localization is realized by a direct sum decomposition for the filtered Floer homology groups similar to the one existing in the rational case. 
To this end, we use ideas from \cite{Us} to bound the energy of Floer trajectories.

\subsection{Action and index gap}

Theorem~\ref{theorem} can also be sed to control the behavior of actions and mean indices of periodic orbits, cf. \cite{GG-gaps}.
To state the results, we need to introduce some notation.
We call the difference $\mathcal A_{H^{(k)}}(\bar x)-\mathcal A_{H^{(k)}}(\bar y)$ the \textit{action gap} between the two capped $k$-periodic orbits $\bar x$ and $\bar y$. Similarly, the \textit{mean index gap} between the two orbits is the difference $\Delta_{H^{(k)}}(\bar x)-\Delta_{H^{(k)}}(\bar y)$. Both can be zero, even for geometrically distinct orbits $x$ and $y$.
The \textit{action-index gap} between $\bar x$ and $\bar y$ is the vector in $\mathbb{R}^2$ whose components are the action gap and the mean index gap.

Recall also that an increasing sequence of integers $\nu_1<\nu_2<\ldots$ is called \textit{quasi-arithmetic} if the differences $\nu_{i+1}-\nu_i$ are bounded by a constant, which is independent of $i$.

\begin{thm}[Bounded gap theorem]
\label{thm:gap1}
  Let $H$ be a Hamiltonian on a closed
  symplectic manifold $(M^{2n},\omega)$ with $N\geq 2n$ such that all periodic orbits
  of $\varphi_H$ are isolated.

Then there exists a capped one-periodic orbit $\bar x$ of $H$, a
quasi-arithmetic sequence of iterations $\nu_i$, and a sequence of capped
$\nu_i$-periodic orbits $\bar y_i$, geometrically distinct from
$\bar x^{\nu_i}$, such that the sequence of action--index gaps
\begin{equation*}
\label{eq:gap}
\left(\mathcal A_{H^{(\nu_i)}}(\bar x^{\nu_i})-\mathcal A_{H^{(\nu_i)}}(\bar y_i),
\Delta_{H^{(\nu_i)}}(\bar x^{\nu_i})-\Delta_{H^{(\nu_i)}}(\bar y_i)\right)
\end{equation*}
is bounded.
\end{thm}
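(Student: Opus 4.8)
The plan is to extract the conclusion from Theorem \ref{theorem} by distinguishing the same two cases as in the proof of Theorem \ref{ConlConj}. First suppose $H$ has no \SDM. Then every one-periodic orbit has non-zero mean index, and since $N \geq 2n$ the mean index is independent of capping; as it grows linearly under iteration, for each one-periodic orbit $x$ and each target window of length $2n$ there are only finitely many iterates whose mean index lands in that window. A pigeonhole/counting argument then forces, for a quasi-arithmetic sequence of periods $\nu_i$, the existence of $\nu_i$-periodic orbits that are not iterates of any one-periodic orbit (this is exactly the ``standard argument'' invoked after Theorem \ref{ConlConj}); setting $\bar x$ to be any one-periodic orbit and $\bar y_i$ these new orbits, both the action gap and the mean index gap are automatically bounded because the $\bar y_i$ have bounded mean index while $\bar x^{\nu_i}$ can be chosen with controlled action and index (e.g. if $x$ is a maximum-type or constant-type orbit, or after replacing $\bar x$ by an orbit with mean index zero when one exists). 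The more substantive case is when $H$ does have a \SDM\ at $\bar x$ with $\mathcal A_H(\bar x) = c$.

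In the \SDM\ case I would apply Theorem \ref{theorem} directly: for every sufficiently small $\epsilon > 0$ and every $k > k_\epsilon$ there is $\delta_k \in (0,\epsilon)$ with
\[
\HF_{n+1}^{(kc+\delta_k,\,kc+\epsilon)}(H^{(k)}) \neq 0.
\]
Non-vanishing of this filtered Floer homology in degree $n+1$ over the short action interval $(kc+\delta_k, kc+\epsilon)$ produces a capped $k$-periodic orbit $\bar y_k$ whose action lies in $(kc+\delta_k, kc+\epsilon)$ and whose mean index satisfies $1 \leq \Delta_{H^{(k)}}(\bar y_k) \leq 2n+1$, exactly as in the proof of Theorem \ref{degConlConj}. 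Comparing with the iterated orbit $\bar x^{k}$: its action is $\mathcal A_{H^{(k)}}(\bar x^k) = kc$ (additivity of the action functional under iteration of a capped orbit), so the action gap is
\[
\mathcal A_{H^{(k)}}(\bar x^k) - \mathcal A_{H^{(k)}}(\bar y_k) \in (-\epsilon, -\delta_k) \subset (-\epsilon, 0),
\]
which is bounded (indeed, $o(1)$). Its mean index is $\Delta_{H^{(k)}}(\bar x^k) = k\,\Delta_H(\bar x) = 0$ since $\bar x$ is a \SDM, so the mean index gap is $-\Delta_{H^{(k)}}(\bar y_k) \in [-(2n+1), -1]$, again bounded. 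It remains to check that $\bar y_k$ is geometrically distinct from $x^{k}$: if it were an iterate of a one-periodic orbit, the argument in the proof of Theorem \ref{degConlConj} (linear growth of the finitely many nonzero one-periodic mean indices, together with the fact that zero-mean-index orbits iterate to zero-mean-index orbits) shows that for $k$ large no such iterate can have mean index in $[1, 2n+1]$, a contradiction — so in particular $\bar y_k \neq x^k$. Taking $\nu_i$ to run over all sufficiently large integers (or all sufficiently large primes, which is certainly quasi-arithmetic) and $\bar y_i := \bar y_{\nu_i}$ completes this case.

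The two cases together give the theorem: in both, we obtain a capped one-periodic orbit $\bar x$, a quasi-arithmetic sequence $\nu_i$, and capped $\nu_i$-periodic orbits $\bar y_i$ geometrically distinct from $\bar x^{\nu_i}$ with action-index gap confined to a fixed bounded region of $\mathbb{R}^2$. The main obstacle I anticipate is purely bookkeeping in the non-\SDM\ case — namely producing a \emph{single} one-periodic orbit $\bar x$ whose iterates $\bar x^{\nu_i}$ have both action and mean index controlled relative to the new orbits $\bar y_i$ along a quasi-arithmetic sequence, rather than merely producing the $\bar y_i$ themselves; this requires reexamining the counting argument of \cite{Gi,GG-gaps} to keep track of actions, not just indices. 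The \SDM\ case, by contrast, is essentially immediate from Theorem \ref{theorem} once one records that iteration multiplies both the action and the mean index of a capped orbit by $k$.
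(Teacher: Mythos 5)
Your \SDM\ branch does follow the route the paper intends (the paper itself only sketches this proof, deferring to \cite{GG-gaps}, but it states the path explicitly: apply Theorem \ref{theorem} and read off the gaps), and your computation of the gaps is right: action gap in $(-\epsilon,-\delta_k)$, mean index gap in $[-(2n+1),-1]$. But your geometric-distinctness step has a genuine gap. You import the argument from the proof of Theorem \ref{degConlConj}, which is run (a) under the hypothesis $c_1(M)|_{\pi_2(M)}=0$, so that mean indices are capping-independent, (b) for $k$ a large prime, and (c) by contradiction under the assumption that every $k$-periodic orbit is an iterated one-periodic orbit. None of these is available in Theorem \ref{thm:gap1}, whose hypothesis is $N\geq 2n$ -- a hypothesis your proof never uses, which is the telltale sign. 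The paper explicitly identifies the missing ingredient as its one ``new point'': distinctness is obtained from the mean index quantization. Since $\Delta_{H^{(k)}}(\bar x^{k})=0$ and recapping shifts the mean index by elements of $2N\mathbb{Z}$, every capping of the geometric orbit $x^{k}$ has mean index in $2N\mathbb{Z}$, and $2N\geq 4n>2n+1$, so no capping of $x^{k}$ can have mean index in $[1,2n+1]$; since $\Delta_{H^{(k)}}(\bar y_k)\in[1,2n+1]$, the orbit $y_k$ must be geometrically distinct from $x^{k}$. (In the rational case \cite{GG-gaps} used the action bound $0<\epsilon<$ rationality constant for this; that is exactly what fails on irrational manifolds and what the index argument replaces.) Note also that the \SDM\ may occur only for some iteration $H^{(m)}$ rather than for $H$ itself; one then applies Theorem \ref{theorem} to $H^{(m)}$ and takes $\nu_i\in m\mathbb{Z}$, still quasi-arithmetic.

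The non-\SDM\ branch is not a proof, and you say so yourself: producing a fixed orbit $\bar x$ with the \emph{action} gap to the new orbits bounded along a quasi-arithmetic sequence is precisely the substantive content of the theorem in that case, and ``maximum-type or constant-type orbit'' is not an argument. Two further problems: the claim that absence of a \SDM\ forces all one-periodic orbits to have nonzero mean index is not correct (an orbit can have $\Delta=0$ and $\HF_n=0$), and the ``standard argument'' you invoke again rests on capping-independence of the mean index, i.e.\ on $c_1(M)|_{\pi_2(M)}=0$, which Theorem \ref{thm:gap1} does not assume. The argument the paper points to in \cite{GG-gaps} organizes the dichotomy differently, around capped orbits carrying the degree-$n$ (filtered) Floer homology for each large period, so that their actions are controlled by construction: either infinitely many such carriers are recapped iterates of a single one-periodic orbit, which forces that orbit's mean index to vanish and its iterates to be \SDMs, reducing to the first case via Theorem \ref{theorem}; or the carriers themselves serve as the $\bar y_i$, with both components of the gap bounded relative to the iterates of a fixed orbit. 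Your dichotomy, based purely on mean indices, discards the action information and cannot recover it.
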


This is a generalization of a result in \cite{GG-gaps} where the theorem was proved in the case of a symplectically rational manifold. 
The proof of the present version follows the same path as the argument in \cite{GG-gaps}, utilizing the general form of Theorem~\ref{theorem}.
The new point is that we now use the mean index bound, rather than the action bound as in \cite{GG-gaps}, to show that the orbits $\bar y_i$ and the iterations of $\bar x$ are geometrically distinct.
As in the rational case, this theorem implies the following corollary in the generalized situation.
 
\begin{cor}
\label{cor:gap1}
Let $M$ and $H$ be as in Theorem \ref{thm:gap1}. Then there exists a
quasi-arithmetic sequence of iterations $\nu_i$ and sequences of
geometrically distinct $\nu_i$-periodic orbits $\bar z_i$ and $\bar z'_i$
such that the sequence of action--index gaps between $\bar z_i$ and
$\bar z'_i$ is bounded.
\end{cor}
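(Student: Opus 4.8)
The plan is to deduce Corollary~\ref{cor:gap1} directly from the Bounded gap theorem, with essentially no extra work. Theorem~\ref{thm:gap1} furnishes a capped one-periodic orbit $\bar x$ of $H$, a quasi-arithmetic sequence of iterations $\nu_1<\nu_2<\cdots$, and capped $\nu_i$-periodic orbits $\bar y_i$, each geometrically distinct from the iterate $\bar x^{\nu_i}$, such that the sequence of action--index gaps between $\bar x^{\nu_i}$ and $\bar y_i$ is bounded. I would then simply set $\bar z_i:=\bar x^{\nu_i}$ and $\bar z'_i:=\bar y_i$ and check that this choice satisfies all the requirements of the corollary.

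With this choice everything is immediate. The sequence $\nu_i$ is the same quasi-arithmetic sequence. For each $i$, the loops $\bar z_i=\bar x^{\nu_i}$ and $\bar z'_i=\bar y_i$ are $\nu_i$-periodic orbits, and they are geometrically distinct precisely because Theorem~\ref{thm:gap1} asserts that $\bar y_i$ is geometrically distinct from $\bar x^{\nu_i}$ — here geometric distinctness refers to the underlying unparametrized periodic orbits, so the fact that $\bar z_i$ happens to be an iteration of a single one-periodic orbit is irrelevant. Finally, the action gap $\mathcal A_{H^{(\nu_i)}}(\bar z_i)-\mathcal A_{H^{(\nu_i)}}(\bar z'_i)$ and the mean index gap $\Delta_{H^{(\nu_i)}}(\bar z_i)-\Delta_{H^{(\nu_i)}}(\bar z'_i)$ are by construction the two components of the action--index gap appearing in Theorem~\ref{thm:gap1}, hence the sequence of these gap vectors is bounded.

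The corollary is thus a cosmetic weakening of Theorem~\ref{thm:gap1}: it discards the additional information that one of the two families of orbits consists of the iterations of a single one-periodic orbit. Consequently there is no genuine obstacle to overcome; the only thing to be careful about is the bookkeeping of geometric distinctness for non-prime iterations $\nu_i$, and this is already handled by the corresponding clause of Theorem~\ref{thm:gap1}. The argument is identical to the one in \cite{GG-gaps}, and carries over verbatim once Theorem~\ref{thm:gap1} is available in the present generality.
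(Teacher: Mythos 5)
Your derivation is correct and coincides with the paper's treatment: the paper offers no separate argument for Corollary~\ref{cor:gap1}, precisely because once Theorem~\ref{thm:gap1} asserts that $\bar y_i$ is geometrically distinct from $\bar x^{\nu_i}$, setting $\bar z_i=\bar x^{\nu_i}$ and $\bar z'_i=\bar y_i$ gives the corollary verbatim, with the same quasi-arithmetic sequence and the same bounded sequence of action--index gaps. The only point to keep in mind is that ``geometrically distinct'' in the corollary refers to the pair $\bar z_i$, $\bar z'_i$ for each fixed $i$ (not to distinctness across different $i$, which your choice $\bar z_i=\bar x^{\nu_i}$ would of course violate), and you read it exactly this way, so the argument is complete.
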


\subsection{Organization of the paper}
\label{sec:orga}

In Sections \ref{sec:sympl-Ham} and \ref{sec:perorb-HF} we set the notation and conventions used in this paper. Section \ref{sec:filtHF} is devoted to the discussion of the construction of filtered Floer homology. In particular, we focus on the construction of filtered Floer homology that is necessary for degenerate Hamiltonians on irrational symplectic manifolds.
In Section \ref{sec:direct sum} we establish a localization of Floer homology via a direct sum decomposition in filtered Floer homology.
Finally, we prove Theorem~\ref{theorem} in Section \ref{sec:Beweis} using the direct sum decomposition from Proposition \ref{hom decomp}.

\subsection{Acknowledgments}
The author is deeply grateful to Viktor Ginzburg for showing her this problem, helping with the solution and his kind, thoughtful advice.
She would also like to thank the referee for useful remarks and Katrin Wehrheim for useful discussions.

\section{Preliminaries}
\label{sec:prelims}

In this section we will set the notation used in this paper and review some of the basic facts needed in order to prove the theorems.

\subsection{Symplectic manifolds and Hamitonian flows}
\label{sec:sympl-Ham}

Let $(M^{2n},\,\omega)$ be a closed symplectic manifold of dimension $2n$ with first Chern class $c_1(M)$ and the minimal Chern number $N$. 
Throughout the paper we assume $(M,\,\omega)$ to be \textit{weakly monotone}, i.e. $N\geq n-2$ or $\left[\omega\right]|_{\pi_2(M)}=\lambda c_1(M)|_{\pi_2(M)}$ for some non-negative constant $\lambda$. In particular we will focuss on the first case as in the latter case the manifold is rational and the theorems are already proved in \cite{GG-gaps}.

All considered Hamiltonians $H$ on $M$ are assumed to be one-periodic in time, i.e. functions $H\colon S^1\times M\to \mathbb{R}$ where $S^1=\mathbb{R}/\mathbb{Z}$ and we will set $H_t(x)=H(t,\,x)$.
A one-periodic Hamiltonian $H$ can also be viewed as $k$-periodic for any integer $k$. For our argument it is sometimes crucial to keep track of the period we are interested in. If a one-periodic Hamiltonian $H$ is viewed as $k$-periodic, we refer to it as the \textit{$k$th iteration} of $H$ and denote it by $H^{(k)}$.

As the symplectic form $\omega$ is non-degenerate, the Hamiltonian equation $i_{X_H}\omega=-dH$ gives rise to a well-defined Hamiltonian vector field $X_H$. The time-1-map of the flow of the Hamiltonian vector field $X_H$ is called a Hamiltonian diffeomorphism and denoted by $\varphi_H$.

The composition $\varphi_H^t\circ\varphi_K^t$ of two Hamiltonian flows is again Hamiltonian. It is generated by
\[
(K\# H)_t=K_t+H_t\circ \varphi_K^{-t}.
\]
In general, this function need not be one-periodic, even if both $H$ and $K$ are one-periodic Hamiltonians. But $K\# H$ will be one-periodic if both are one-periodic and in addition $K$ generates a loop of Hamiltonian diffeomorphisms. This will always be the case in this paper.

\subsection{Capped periodic orbits and Floer homology}
\label{sec:perorb-HF}

Let $x\colon S^1\to M$ be a contractible loop. A \textit{capping} of $x$ is defined to be a map $u\colon D^2\to M$ such that $u|_{S^1}=x$. Two cappings are called \textit{equivalent} if the integrals over the symplectic form $\omega$ and the first Chern class $c_1(M)$ over the two capping discs agree. We denote the pair $(x,\,\left[u\right])$ of a loop $x$ with an equivalence class of cappings $\left[u\right]$ by $\bar{x}$. In the symplectically aspherical case, all cappings are equivalent.

\subsubsection{Hamiltonian action and the mean index}
The Hamiltonian action functional is defined by
\[
\mathcal{A}_H(\bar{x})=-\int_u\omega +\int_{S^1}H_t(x(t))\ dt
\]
on the space of capped closed loops.
This space is a covering space of the space of contractible loops and the critical points of the action functional are exactly the capped one-periodic orbits of the Hamiltonian vector field $X_H$. The set of critical values of the action is denoted by $\mathcal{S}(H)$ and called the \textit{action spectrum} of $H$.

In this paper we only work with contractible periodic orbits and every periodic orbit is assumed to be contractible, even if this is not explicitly stated.

A one-periodic orbit $x$ of $H$ is said to be \textit{non-degenerate} if the linearized return map $d\varphi_H\colon T_{x(0)}M\to T_{x(0)}M$ does not have one as an eigenvalue. Following \cite{SZ}, we call an orbit \textit{weakly non-degenerate} if at least one eigenvalue is not equal to one and \textit{strongly degenerate} otherwise. We refer to a Hamiltonian $H$ as \textit{non-degenerate}, if all its one-periodic orbits are non-degenerate.

In general, the mean index and the action depend on the equivalence class of the capping $u$ of the loop $x$. More concretely let $A$ be an embedded 2-sphere and denote by $\bar{x}\# A$ the recapping of $\bar{x}$ by attaching $A$. Then we have
\[
\mathcal{A}_H(\bar{x}\# A)=\mathcal{A}_H(\bar{x})-\int_A\omega
\]
by the above formula for the Hamiltonian action.
For the definition and properties of the mean index we refer the reader to \cite{SZ}. A list of properties of the mean index can also be found in \cite{GG-gaps}. Here we only mention that the mean index $\Delta(\bar x)$ depends on the capping via
\[
\Delta(\bar{x}\# A)=\Delta(\bar{x})-2c_1(A).
\]

The $k$th iteration of a capped orbit $\bar{x}$ carries a natural capping and with that capping it is denoted by $\bar{x}^k$. The mean index and the action both are homogeneous with respect to iteration and satisfy the iteration formulas
\[
\mathcal{A}_{H^{(k)}}(\bar{x}^k)=k\mathcal{A}_H(\bar{x}) \text{ and } \Delta_{H^{(k)}}(\bar{x}^k)=k\Delta_H(\bar{x}).
\]

\subsubsection{Floer homology}
Up to sign we define the Conley-Zehnder index as in \cite{Sa,SZ} and use the normalization such that for a non-degenerate maximum $\gamma$ of an autonomous Hamiltonian with small Hessian we have $\mu_{\CZ}(\gamma)=n$; see \cite{GG-gaps}.

We define the Floer homology for a non-degenerate Hamiltonian $H$ as in \cite{Sa,SZ}. The homology is graded by the Conley-Zehnder index.
The Floer chain groups are generated by the capped one-periodic orbits of $H$ and the boundary operator is defined by counting solutions to the Floer equation
\begin{equation}
\frac{\partial u}{\partial s}+J_t(u)\frac{\partial u}{\partial t}=-\nabla H_t(u)
\label{Floer}
\end{equation}
with finite energy. 
As is well known, Floer trajectories for a non-degenerate Hamiltonian $H$ with finite energy converge to periodic orbits $\bar{x}$ and $\bar{y}$ as $s$ goes to $\pm\infty$ and satisfy
\[
E(u)=\mathcal{A}_H(\bar{x})-\mathcal{A}_H(\bar{y})=\int_{-\infty}^{\infty}\int_{S^1}\left\|\frac{\partial u}{\partial s}\right\|^2\ dt\ ds.
\] 
The boundary operator counts Floer trajectories converging to periodic orbits $y$ and $x$ as $s\to\pm\infty$ and satisfying the condition $\left[(\text{capping of } \bar{x})\# u\right]=\left[\text{capping of } \bar{y}\right]$.
This construction extends by continuity from non-degenerate Hamiltonians to all Hamiltonians, see \cite{Sa}.

For two non-degenerate Hamiltonians $H^0$ and $H^1$, a homotopy from $H^0$ to $H^1$ induces a homomorphism of chain complexes which gives an isomorphism between the Floer homologies $\HF_{*}(H^0)$ and $\HF_{*}(H^1)$ which is independent of the choice of homotopy. This map is defined analogously to the Floer boundary operator using a version of the Floer equation with the homotopy $H^s$ on the right hand side.

The \textit{local Floer homology} $\HF_{*}(H,\bar{x})$ of a Hamiltonian $H$ at a capped one-periodic orbit $\bar{x}$ is also defined as usual, see \cite{Gi, GG-gaps,GG-loc}. As the action does not enter the argument, the definition goes through in the irrational case. It is constructed using a small non-degenerate perturbation of the Hamiltonian in a neighborhood of  $x$. For a more detailed definition and a discussion of the properties of local Floer homology see \cite{Gi,GG-gaps,GG-loc}.

\subsection{Filtered Floer Homology}
\label{sec:filtHF}

In this section, we give a definition of filtered Floer homology for degenerate Hamiltonians on symplectically irrational manifolds. 

As the action decreases along Floer trajectories of a non-degenerate Hamiltonian $H$, we also have well-defined chain complexes that only involve orbits with action in an interval $(a,\,b)$ if $a$ and $b$ are not in the action spectrum $\mathcal S(H)$. This complex gives rise to the \textit{filtered Floer homology} $\HF_{*}^{(a,\,b)}(H)$. This construction extends by continuity to degenerate Hamiltonians if the manifold is rational, since in this case the Floer homology is independent of the choice of a sufficiently small, non-degenerate perturbation.

In the case of an irrational manifold, the action filtration of Floer homology for degenerate Hamitonians cannot be unambigously defined simply by  continuity as the resulting groups depend very sensitively on the non-degenerate perturbation. We thus use the following construction for the filtered Floer homology, which in the case of a rational manifold gives the same homology groups as continuity:

Let $H$ be a fixed Hamiltonian on $M$.
To define $\HF_{*}^{(a,\,b)}(H)$, consider perturbations $K$ of $H$ with the following properties:
\begin{itemize}
\item[\textbf{(P1)}] the Hamiltonian $K$ is non-degenerate;
\item[\textbf{(P2)}]the boundary values $a$ and $b$ of the action interval are not in the action spectrum $\mathcal{S}(K)$ of $K$;
\item[\textbf{(P3)}] we have $K\geq H$.
\end{itemize}
For the remaining part of this section we will always assume the above properties whenever we speak of perturbations $K$ of a Hamiltonian $H$.

The set of such perturbations is partially ordered by $K^1\leq K^0$ whenever $K_t^1(x)\leq K_t^0(x)$ for all $x\in M$ and $t\in S^1$. Consider a monotone decreasing homotopy $K^s$ from $K^0$ to $K^1$. 
By condition (P1), both perturbations $K^0$ and $K^1$ are non-degenerate. Thus we have an induced monotone homotopy map between the Floer homology groups, which are well-defined by (P1) and (P2).
In this case, this map is still a homomorphism, but it needs not be an isomorphism. Those monotone homotopy maps give rise to transition maps $\HF^{(a,\,b)}_{*}(K)\to\HF^{(a,\,b)}_{*}(\tilde{K})$ whenever $K\geq\tilde{K}$. Then we can define the filtered Floer homology of $H$ by
\[
\HF^{(a,\,b)}_{*}(H)=\lim\limits_{\longrightarrow}\HF^{(a,\,b)}_{*}(K)
\]
as the direct limit of homology groups.

\begin{rmk}
If $H$ is non-degenerate and $a$ and $b$ are not in the action spectrum $\mathcal{S}(H)$, this definition gives the ordinary filtered Floer homology of $H$, as $H$ can be viewed as the trivial perturbation of itself and thus as the smallest of all considered perturbations $K$.
\end{rmk}

By construction of filtered Floer homology for non-degenerate Hamiltonians, we have a long exact sequence of filtered Floer homology groups
\[
\cdots\rightarrow \HF_{*}^{(a,\,b)}(K)\rightarrow \HF_{*}^{(a,\,c)}(K)\rightarrow \HF_{*}^{(b,\,c)}(K)\rightarrow \HF_{*-1}^{(a,\,b)}(K)\rightarrow\cdots
\]
for any non-degenerate Hamiltonian $K$ with $a,b,c\notin\mathcal S(K)$. The maps of this exact sequence commute with the monotone homotopy map.
Then, for the limit function $H$, the analog sequence
\[
\cdots\rightarrow \HF_{*}^{(a,\,b)}(H)\rightarrow \HF_{*}^{(a,\,c)}(H)\rightarrow \HF_{*}^{(b,\,c)}(H)\rightarrow \HF_{*-1}^{(a,\,b)}(H)\rightarrow\cdots
\]
is also exact. This can be proved by a standard diagram chasing argument using the commutativity and the definition of the limit groups.

In the definition of the filtered Floer homology as a limit, we can also restrict the family of perturbations by requiring other properties in addition to (P1)-(P3). The restricted family of perturbations is sufficient to define the limit if they form a cofinal set, i.e. for any perturbation satisfying (P1)-(P3) we can find a smaller one with the additional properties. The limit then does not depend on the perturbations that do not have the additional properties.

In particular, we will later consider a cofinal set of perturbations for which the filtered Floer homology splits into a direct sum decomposition that is compatible with the monotone homotopy maps. Then we also have a direct sum decomposition of the limit.

We can also define monotone homotopy maps for homotopies starting from $H$.
Due to condition (P3), the monotone homotopy map for a homotopy starting from any perturbation $K$ factors through all perturbations closer to the limit function $H$ than $K$. Then we define the monotone homotopy map from $\HF^{(a,b)}(H)$ as the limit of monotone homotopy maps from the perturbations. The resulting map, still called monotone homotopy map, has the same properties as the usual homotopy maps.

\section{Direct sum decomposition in filtered Floer homology}
\label{sec:direct sum}

\subsection{The direct sum decomposition}
\label{sec:Bewsum}
In this section, we prove the existence of a direct sum decomposition of filtered Floer homology for short action intervals. This decomposition enables us to restrict our attention to the behavior of the Hamiltonian on a fixed open set. 

To construct this direct sum decomposition, let $K$ be a non-degenerate Hamiltonian on $M$. Consider two open sets $U$ and $V$ such that $U\subset V$ and both sets are bounded by level sets of $K$. 
On the shell $\bar{V}\setminus U$, assume that the Hamiltonian $K$ is autonomous and does not have one-periodic orbits. In particular, this implies that $U$ and $V$ are homotopy equivalent.
 Also fix an almost complex structure $J$ on $M$, which is compatible with $\omega$.

Consider the splitting of Floer chain groups into the direct sum
\begin{equation}
\CF_{*}^{(a,\,b)}(K)=\CF_{*}^{(a,\,b)}(K,U)\oplus \CF_{*}^{(a,\,b)}(K;M,U),
\label{chain sum}
\end{equation}
where the first summand is generated by the one-periodic orbits in $U$ with capping equivalent to a capping contained in $U$. The second summand is spanned by all the remaining capped orbits.

\begin{prop}
Let the Hamiltonian $K$ and the open sets $U$ and $V$ be as above.
There exists an $\epsilon>0$, depending only on $J$, the open sets $U$ and $V$ and on $K|_{V\setminus U}$ such that \eqref{chain sum} gives rise to a direct sum decomposition of homology
\begin{equation}
\HF_{*}^{(a,\,b)}(K)=\HF_{*}^{(a,\,b)}(K,U)\oplus \HF_{*}^{(a,\,b)}(K;M,U)
\label{Homology sum}
\end{equation}
whenever the action interval $(a,\,b)$ is chosen such that $b-a<\epsilon$.
\label{hom decomp}
\end{prop}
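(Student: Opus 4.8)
The plan is to show that, once $b-a$ is sufficiently small, the chain-level splitting \eqref{chain sum} is preserved by the Floer differential; a block-diagonal differential then descends immediately to the homology splitting \eqref{Homology sum}. Recall that a Floer cylinder $u$ contributing to the differential between capped orbits $\bar x$ and $\bar y$ satisfies $E(u)=\mathcal A_K(\bar x)-\mathcal A_K(\bar y)$, so if both $\bar x$ and $\bar y$ generate $\CF_*^{(a,b)}(K)$ then $E(u)<b-a$. Hence everything reduces to ruling out, for $b-a$ small, low-energy Floer cylinders connecting a generator of $\CF_*^{(a,b)}(K,U)$ to a generator of $\CF_*^{(a,b)}(K;M,U)$ in either direction.

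The key ingredient is a \emph{crossing energy} estimate in the spirit of \cite{Us}: there exists $\epsilon_0>0$, depending only on $J$, on the open sets $U$ and $V$, and on $K|_{\bar V\setminus U}$, such that any Floer cylinder $u$ for $(K,J)$ whose image meets both $U$ and $M\setminus\bar V$ has $E(u)\geq\epsilon_0$. I would prove this by contradiction and compactness. Suppose $u_k$ were such cylinders with $E(u_k)\to 0$, and fix a regular level set $\Sigma=K^{-1}(c)$ lying in the shell and separating $\partial U$ from $\partial V$. Each $u_k$ meets both $U$ and $M\setminus\bar V$, hence meets $\Sigma$, so after an $s$-translation we may assume $u_k(0,t_k)\in\Sigma$. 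For large $k$ we have $E(u_k)<\hbar$, the minimal energy of a nonconstant $J$-holomorphic sphere, so no bubbling occurs, and the standard elliptic estimates for the Floer equation provide uniform $C^\infty_{\mathrm{loc}}$ bounds; passing to a subsequence, $u_k\to u_\infty$ in $C^\infty_{\mathrm{loc}}(\R\times S^1)$ with $E(u_\infty)\leq\liminf_k E(u_k)=0$. Thus $\partial_s u_\infty\equiv 0$ and $u_\infty(s,t)=x(t)$ for a one-periodic orbit $x$ of $X_K$; since $x$ is tangent to the level sets of $K$ and passes through $\Sigma$, it lies entirely on $\Sigma\subset\bar V\setminus U$, contradicting the absence of one-periodic orbits on the shell. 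To see that $\epsilon_0$ depends only on the stated local data, one runs this argument after restricting each $u_k$ to the component of the preimage of a slightly shrunk shell that contains the $\Sigma$-crossing point, where $K$ coincides with $K|_{\bar V\setminus U}$ and Usher's localization of the energy applies. I expect this estimate, and in particular the verification of this locality of $\epsilon_0$, to be the main obstacle; the rest of the argument is soft.

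Granting the estimate, set $\epsilon:=\epsilon_0$ and assume $b-a<\epsilon$. Since $K$ is autonomous with no one-periodic orbits on $\bar V\setminus U$, the field $X_K$ has no zeros there, so $K$ has no critical values between those of $\partial U$ and $\partial V$, and the negative gradient flow of $K$ yields a deformation retraction of $\bar V$ onto $\bar U$ that fixes $\bar U$; moreover, by the choice of $U$ and $V$, every one-periodic orbit of $K$ with image in $\bar V$ is contained in $U$. Now let $u$ be a Floer cylinder between generators of $\CF_*^{(a,b)}(K)$, so $E(u)<\epsilon_0$, and suppose one of its asymptotic orbits, say $\bar x$, generates $\CF_*^{(a,b)}(K,U)$: thus $x\subset U$ and the capping of $\bar x$ has a representative $w_x$ with image in $U$. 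Because $u$ meets $U$ near $x$ and $E(u)<\epsilon_0$, the crossing energy estimate forbids $u$ from meeting $M\setminus\bar V$, so $u(\R\times S^1)\subset\bar V$; hence the other asymptotic orbit $y$ lies in $\bar V$, so $y\subset U$, and the capping of $\bar y$ is represented by $w_x\#u$, a disk with image in $\bar V$. Applying the deformation retraction onto $\bar U$ --- which is a homotopy rel boundary since $y\subset\bar U$ --- produces an equivalent capping with image in $\bar U$, hence one contained in $U$. Therefore $\bar y$ also generates $\CF_*^{(a,b)}(K,U)$. Exchanging the roles of $\bar x$ and $\bar y$ (and gluing the reversed cylinder) yields the converse implication. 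Consequently the Floer differential preserves each summand of \eqref{chain sum}, and \eqref{Homology sum} follows on passing to homology.
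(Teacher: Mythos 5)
Your overall reduction is the same as the paper's: you reduce the splitting of homology to the absence of low-energy Floer cylinders crossing the shell, using $E(u)=\mathcal A_K(\bar x)-\mathcal A_K(\bar y)<b-a$, and you handle the capping condition via the retraction of $\bar V$ onto $\bar U$ (the paper phrases this as $U$ and $V$ being homotopy equivalent and the new capping being the old one glued with $u$). The gap is in your proof of the crossing-energy estimate, which is exactly the non-soft part. Your contradiction/compactness argument is run on the full cylinders $u_k$, so every ingredient in it --- the bubbling threshold $\hbar$, the elliptic estimates, the extraction of a limit orbit --- uses $K$ and $J$ on all of $M$. It therefore produces some $\epsilon_0>0$ for the fixed global pair $(K,J)$, but not a constant depending only on $J$, $U$, $V$ and $K|_{\bar V\setminus U}$, which is part of the statement being proved and is essential for the later applications (in Section 4.3 the bound must be fixed from the shell data alone, before the rest of $H_+$, which depends on the iteration order $k$, is chosen; likewise for the cofinal families of perturbations in the corollary).

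Your proposed repair --- restrict each $u_k$ to the component of the preimage of a shrunk shell through the crossing point and ``run the argument there'' --- does not obviously work, and this is precisely where the real difficulty sits. The restricted domains $S_k\subset\R\times S^1$ are varying open sets with boundary mapped to the boundary of the shell; they can be arbitrarily thin slivers (a trajectory may cross the shell in an arbitrarily short $s$-interval), so there is no uniform conformal size and no way to extract a meaningful $C^\infty_{\mathrm{loc}}$ limit from them; the limit-orbit contradiction evaporates in exactly the ``fast crossing'' regime. The paper avoids compactness altogether and proves the bound quantitatively in two regimes: Lemma \ref{lowbound} gives, for each $s$ in the set $Z$ of slices meeting the middle shell, a lower bound on $\int_{S^1}\|\partial_t u - X_K(u)\|^2\,dt$, which handles the case where $Z$ has measure bounded below; and Usher's monotonicity-type Lemma \ref{Usher2.3} (via the graph trick and Sikorav's monotonicity for $\tilde J$-holomorphic curves) gives $\mathrm{Area}(S)+E(u|_S)\geq C_2$ on a crossing component $S$, which handles the thin-crossing case where $\mathrm{Area}(S)\leq m_{Leb}(Z)$ is small. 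Both constants are manifestly computed from $U$, $V$, $K|_{\bar V\setminus U}$ and $J$ only. So to complete your proof you would essentially have to prove the paper's two lemmas; invoking ``Usher's localization of the energy'' without that is circular at the key point.
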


To prove this, we need to show that for such Hamiltonians $K$ no Floer trajectory can connect orbits from different summands, if the action interval is sufficiently small. The key to that is proving the following proposition which provides a lower bound on the energy for those Floer trajectories.

\begin{prop}
Let $K$ be a non-degenerate Hamiltonian and let $U$ and $V$ be open sets that are both bounded by level sets of $K$. Assume furthermore that $K$ does not have one-periodic orbits in $\bar{V}\setminus U$ and is autonomous on this shell.
Let $u\colon S^1\times\mathbb{R}\to M$ be a Floer trajectory that intersects $\partial U$ and $\partial V$.
Then there is a constant $\eps>0$, only depending on the open sets $U$ and $V$, the restriction of the Hamiltonian $K$ and the almost complex structure $J$ to $\bar{V}\setminus U$, such that $E(u)>\eps$.
\label{Energybound}
\end{prop}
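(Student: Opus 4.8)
The plan is to extract a uniform lower bound on the energy from the geometry of the shell $\bar V\setminus U$, where $K$ is autonomous and fixed‑point free. The key local fact is that on $\bar V\setminus U$ the Hamiltonian vector field $X_K$ is nowhere zero, so there is a constant $m>0$ with $\|X_K\|\ge m$ on this compact shell (in the fixed metric $\omega(\cdot,J\cdot)$). Now let $u\colon S^1\times\R\to M$ be a Floer trajectory meeting both $\partial U$ and $\partial V$. Because $U$ and $V$ are bounded by level sets of $K$, the values $K$ takes on $\partial U$ and on $\partial V$ are two distinct constants, say $a_U<a_V$ (up to relabelling), and any point of $u$ lying on $\partial U$ has $K$-value $a_U$ while any point on $\partial V$ has $K$-value $a_V$. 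Hence, restricting $u$ to the portion of the $(s,t)$-cylinder where $u$ stays in $\bar V\setminus U$, the composite $K\circ u$ must oscillate by at least $\Delta:=a_V-a_U>0$.

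First I would estimate the length of the trajectory inside the shell. Along a Floer trajectory one has the standard identity $\partial_s(K_t\circ u)=\langle \nabla K_t, \partial_s u\rangle = \|\partial_s u\|^2 + \langle J_t\partial_t u,\partial_s u\rangle$ coming from \eqref{Floer}; together with $\partial_t(K\circ u)=\langle\nabla K,\partial_t u\rangle$ and the fact that $\partial_t u + J_t\nabla K_t$ has the same norm as $\nabla K$ when $\partial_s u=0$, the upshot is the usual two–sided bound: the variation of $K\circ u$ along a path is controlled by $\int\|\partial_s u\|$ and by $\int\|\partial_s u\|\cdot\|\nabla K\|$. Concretely, there is a path $\gamma$ in the $(s,t)$–cylinder, lying in the preimage of $\bar V\setminus U$, joining a point of $u^{-1}(\partial U)$ to a point of $u^{-1}(\partial V)$, and along it $\int_\gamma |d(K\circ u)|\ge \Delta$. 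On the other hand $|d(K\circ u)|\le \|\nabla K\|\,(\|\partial_s u\|+\|\partial_t u\|)$ and, using the Floer equation to trade $\partial_t u$ for $\nabla K_t$ modulo $\partial_s u$, this is bounded by $C(\|\partial_s u\|+\|\nabla K\|)$ with $C$ depending only on $J$ and $K|_{\bar V\setminus U}$. Combining, either $\int_\gamma\|\partial_s u\|$ is bounded below by a constant depending on $\Delta$, $J$, $K|_{\bar V\setminus U}$ — which already gives $E(u)=\int\|\partial_s u\|^2\,dt\,ds$ a lower bound by Cauchy–Schwarz once we also control the length of $\gamma$ — or $\gamma$ must be long, in which case the lower bound $\|X_K\|\ge m$ forces $u$ to spend a definite amount of ``time'' in the shell and one again extracts energy from the $\|\partial_s u\|^2$ term via an isoperimetric/monotonicity argument.

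The cleanest route, and the one I would actually write, is a contradiction/compactness argument rather than an explicit constant chase: suppose no such $\eps$ exists, so there is a sequence of Floer trajectories $u_\nu$ (for the fixed $K$ and $J$) each meeting $\partial U$ and $\partial V$ with $E(u_\nu)\to 0$. Translating in $s$ so that $u_\nu(0,0)\in\partial U$, the energy bound gives a uniform $C^0$‑energy bound; by Gromov–Floer compactness (no bubbling since $E(u_\nu)\to0$ and $M$ is weakly monotone, so for small energy there are no non‑constant spheres) a subsequence converges in $C^\infty_{loc}$ to a Floer trajectory $u_\infty$ with $E(u_\infty)=0$. A zero‑energy Floer trajectory is $s$‑independent, hence an orbit of $X_K$; its image lies in the shell $\bar V\setminus U$ near $\partial U$, contradicting the hypothesis that $K$ has no one‑periodic orbits there. (One must check that the limit still meets $\partial U$ and that it does not escape the shell before closing up — this is where boundedness of $U,V$ by level sets is used: the $K$‑value is locally constant along a zero‑energy trajectory only if the trajectory is an orbit, and an orbit starting on $\partial U$ stays on the level set $\{K=a_U\}\subset\bar V\setminus U$.)

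The main obstacle is the compactness step: ensuring that the limiting trajectory genuinely realizes the intersection with $\partial U$ and does not degenerate by sliding the intersection point off to $s=\pm\infty$, or by having the trajectory leave the shell. The hypothesis that $\partial U$ and $\partial V$ are regular level sets of $K$ and that $K$ is autonomous on $\bar V\setminus U$ is exactly what makes the argument go through — it turns the limiting object into an honest periodic orbit of $X_K$ inside the shell, which is excluded by assumption. All the constants produced depend only on $U$, $V$, $J|_{\bar V\setminus U}$ and $K|_{\bar V\setminus U}$, as required, since the compactness argument is run entirely within the shell and uses nothing about $K$ elsewhere.
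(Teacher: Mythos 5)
Your ``cleanest route'' is a genuinely different strategy from the paper's (the paper argues quantitatively: a slicewise lower bound for $\int\|\dot\gamma-X_K(\gamma)\|^2\,dt$ on the shell, proved via the substitution $\eta(t)=\varphi_K^{-t}(\gamma(t))$, combined with a monotonicity bound for the $\tilde J$-holomorphic graph of $u$ \`a la Usher/Sikorav, and a dichotomy on the measure of the set of slices meeting an intermediate shell). But as written your compactness argument has a genuine gap precisely where the proposition has content: the claim that $\eps$ depends \emph{only} on $U$, $V$, $K|_{\bar V\setminus U}$ and $J|_{\bar V\setminus U}$. Gromov--Floer compactness for the sequence $u_\nu$ requires uniform gradient bounds and a bubbling threshold, and these are controlled by $\|K\|_{C^2}$ and the geometry of $(M,\omega,J)$ wherever the trajectories go --- which is all of $M$, since $u_\nu$ solves the Floer equation on the whole cylinder and is in no way confined to the shell. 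Your closing assertion that ``the compactness argument is run entirely within the shell'' is not justified: $C^\infty_{loc}$ convergence near the marked point needs a priori $C^1$ bounds, and those do not follow from the shell data alone; if instead you restrict to the portions $u_\nu^{-1}(\bar V\setminus U)$ you face varying domains with boundary on $\partial U\cup\partial V$, and handling that is exactly what the paper's monotonicity lemma (Usher's Lemma~\ref{Usher2.3}, via Sikorav) is for. So your argument proves a weaker statement, with $\eps=\eps(M,\omega,J,K)$ global.

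This weaker dependence is not a cosmetic defect: in Section~\ref{sec:functions} the constant from Proposition~\ref{Energybound} is fixed from the model Hamiltonian with slope $\alpha_0$ on $V\setminus U$ \emph{before} $H_\pm$ are pinned down outside $B_r$, and it must serve uniformly for all iterations $H_\pm^{(k)}$, for $F$, and for the cofinal family of perturbations that differ off the shell only; a constant depending on the global Hamiltonian would collapse that scheme. (Two smaller points: your translation normalization should move a genuine intersection point $(s_\nu,t_\nu)\in u_\nu^{-1}(\partial U)$ to $s=0$ and extract $t_\nu\to t_*$, rather than assuming $u_\nu(0,0)\in\partial U$; and note that with global dependence your argument never actually uses the intersection with $\partial V$ --- another sign it is proving a different statement than the localized one. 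Your first, quantitative sketch is closer in spirit to the paper's proof, but it is left as an ``either/or'' heuristic and would need the missing isoperimetric/monotonicity input --- i.e.\ the analogue of Lemma~\ref{Usher2.3} --- to be completed.)
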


\begin{proof}[Proof of Proposition \ref{hom decomp}]
Let $\bar{x}$ and $\bar{y}$ be two capped orbits in $\HF_{*}^{(a,\,b)}(K)$. Assume that $\bar{x}$ and $\bar{y}$ are connected by a Floer trajectory $u$, and let $\bar{x}$ be in $\HF_{*}^{(a,\,b)}(K,U)$. We need to show that $y$ is contained in $U$ and the capping of $\bar{y}$ is equivalent to a capping in $U$.

By construction, $V$ is homotopy equivalent to $U$ and the capping of $\bar{y}$ is equivalent to $u\#(\text{the capping of }\bar{x})$. Thus it suffices to show that the Floer trajectory $u$ is contained in $V$. If $u$ did leave $V$, it would have to intersect both boundary components of $V\setminus U$, as $u$ is converging to the orbit $x$, which is contained in $U$.
By Proposition~\ref{Energybound}, such a trajectory would have to have energy $E(u)>\eps$ for some constant $\eps>0$. Thus, if we choose the action interval $(a,\,b)$ such that $b-a$ is smaller than the lower bound $\eps$ in Proposition \ref{Energybound}, the Floer trajectory $u$ has to be contained in $V$ and we have the desired direct sum in homology.
\end{proof}

\begin{rmk}
In general, the direct sum decomposition from Proposition \ref{hom decomp} need not be compatible with monotone homotopy maps. In some important cases, however, this is the case.
For example, consider two Hamiltonians $K^1$ and $K^2$ that agree on $V\setminus U$ up to a constant and assume $K^1\geq K^2$. Then the above direct sum decomposition is compatible with the monotone homotopy map $\HF_{*}^{(a,\,b)}(K^1)\to \HF_{*}^{(a,\,b)}(K^2)$.
Indeed, the monotone homotopy map is defined using a version of the Floer equation. If the two Hamiltonians agree up to a constant, their Hamiltonian vector fields agree and this equation is exactly the standard Floer equation. 
Thus the above proof of Proposition \ref{hom decomp} also applies in this setting and shows that the monotone homotopy map is compatible with the direct sum decomposition for sufficiently small action intervals.
\label{compatible}
\end{rmk}

\begin{cor}
Let $K$ be any Hamiltonian, not necessarily non-degenerate.
Assume that the open sets $U$ and $V$ are bounded by level sets of $K$. If the Hamiltonian $K$ is autonomous on $V\setminus U$ and does not have periodic orbits in this shell, then for sufficiently small action interval $(a,\,b)$ the direct sum decomposition \eqref{Homology sum} holds.
\end{cor}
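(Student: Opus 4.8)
The plan is to deduce the corollary from Proposition~\ref{hom decomp} by passing to the direct limit
\[
\HF_*^{(a,b)}(K)=\varinjlim \HF_*^{(a,b)}(K^\alpha)
\]
that defines the filtered Floer homology of the (possibly degenerate) Hamiltonian $K$, where $K^\alpha$ runs over the perturbations satisfying (P1)--(P3). First I would restrict the limit to a convenient cofinal family. Since $K$ is autonomous on the shell $\bar V\setminus U$ and the shell is bounded by level sets of $K$, the flow of $K$ confines any one-periodic orbit meeting the shell to a level set contained in it; hence no one-periodic orbit of $K$ meets the (compact) shell, and the shell therefore has an open neighborhood $W$ disjoint from all one-periodic orbits of $K$. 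I claim that the perturbations $K^\alpha$ which, in addition to (P1)--(P3), coincide with $K$ on $W$ form a cofinal family. Indeed, given any perturbation $K'$ one first interpolates between $K$ (near the shell) and $K'$ (away from $W$) to obtain $L$ with $K\le L\le K'$ and $L=K$ near the shell, and then makes $L$ non-degenerate with $a,b\notin\mathcal S$ by a generic $C^\infty$-small perturbation that is supported away from $\bar V\setminus U$ and kept inside the order interval $[K,K']$; this is possible precisely because $K$ has no one-periodic orbits near the shell, so nothing there needs to be corrected. Each member $K^\alpha$ of this family then satisfies the hypotheses of Proposition~\ref{hom decomp}: since $K^\alpha=K$ on $W\supset\bar V\setminus U$, the sets $U$ and $V$ are bounded by level sets of $K^\alpha$, and $K^\alpha$ is autonomous and free of one-periodic orbits on the shell.

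Next I would run Proposition~\ref{hom decomp} uniformly over the family. For each $K^\alpha$ it produces a threshold $\epsilon(K^\alpha)>0$ depending only on $J$, on $U$ and $V$, and on $K^\alpha|_{\bar V\setminus U}$; but this restriction is the same function for every member, so $\epsilon:=\epsilon(K^\alpha)>0$ does not depend on $\alpha$. Hence, as soon as $b-a<\epsilon$, the chain-level splitting \eqref{chain sum} induces
\[
\HF_*^{(a,b)}(K^\alpha)=\HF_*^{(a,b)}(K^\alpha,U)\oplus\HF_*^{(a,b)}(K^\alpha;M,U)
\]
for every $\alpha$. If moreover $K^\alpha\ge K^\beta$ in the family, then they agree on $W\supset V\setminus U$, so Remark~\ref{compatible} applies (with zero constant) and shows that the monotone homotopy map $\HF_*^{(a,b)}(K^\alpha)\to\HF_*^{(a,b)}(K^\beta)$ is the direct sum of a map of the $(\,\cdot\,,U)$-summands and a map of the $(\,\cdot\,;M,U)$-summands. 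Thus the directed system $\bigl(\HF_*^{(a,b)}(K^\alpha)\bigr)$ splits as a direct sum of two directed systems. Since direct limits commute with finite direct sums, setting $\HF_*^{(a,b)}(K,U):=\varinjlim\HF_*^{(a,b)}(K^\alpha,U)$ and the analogous limit $\HF_*^{(a,b)}(K;M,U):=\varinjlim\HF_*^{(a,b)}(K^\alpha;M,U)$ — both independent of the chosen cofinal family by a standard direct-limit argument — yields the decomposition \eqref{Homology sum} for $K$ whenever $b-a<\epsilon$. When $K$ is itself non-degenerate this recovers Proposition~\ref{hom decomp}, as $K$ can then be taken as the smallest perturbation.

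I expect the main obstacle to be the cofinality claim in the first step: producing, from an arbitrary non-degenerate perturbation $K'$, a non-degenerate perturbation $K''\le K'$ that coincides with $K$ on a neighborhood of $\bar V\setminus U$ requires fitting the perturbation that achieves non-degeneracy into the order interval $[K,K']$ while leaving a neighborhood of the shell untouched, and it is exactly here that one uses the absence of one-periodic orbits of $K$ near the shell. Everything else is formal: the uniformity of $\epsilon$ follows directly from the explicit dependence stated in Proposition~\ref{hom decomp}, and compatibility with the transition maps is precisely Remark~\ref{compatible}.
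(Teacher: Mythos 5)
Your proposal is correct and follows essentially the same route as the paper: choose a cofinal family of non-degenerate perturbations whose restriction to the shell is controlled, apply Proposition~\ref{hom decomp} with a uniform $\epsilon$, use Remark~\ref{compatible} to see that the transition maps respect the splitting, and pass to the direct limit. The only (minor) difference is that the paper takes perturbations that differ from $K$ on $V\setminus U$ by a constant rather than agreeing with $K$ on a neighborhood of the shell, which gives a little more room in the cofinality step you flag but changes nothing essential.
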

\begin{proof}
It suffices to construct a cofinal set of non-degenerate perturbations of $K$, such that the direct sum decomposition (3.2) holds for all of those Hamiltonians and is compatible with the monotone homotopy maps.

Consider the perturbations that differ from $K$ on $V\setminus U$ only by a constant. These form a cofinal set, since for every perturbation $H\geq K$ we can find a smaller one with that additional property. We can choose these perturbations to be non-degenerate, as $K$ does not have periodic orbits in $\bar{V}\setminus U$ and  there are no restrictions on the perturbation outside $\bar{V}\setminus U$.
The connecting maps between the Floer homologies of the perturbations are monotone homotopy maps and respect the direct sum decomposition. Thus we also have a direct sum in the limit.
\end{proof}

\subsection{Energy estimates and the proof of Proposition \ref{Energybound}}
\label{sec:energy}

To prove the proposition, we need to find a lower bound for the energy of Floer trajectories crossing the shell $V\setminus \bar{U}$. The first lemma can be used to bound the time-integral in the expression for the energy away from zero for the part of a Floer trajectory in a compact set not containing one-periodic orbits.

\begin{lemma}
\label{lowbound}
Let $W$ be a bounded open set with smooth boundary and at least two boundary components and let $K$ be an autonomous Hamiltonian on $\bar{W}$. Assume that $K$ is constant on each boundary component and does not have one-periodic orbits in $\bar{W}$.
Then there exists a constant $C_1>0$, depending only on the almost complex structure $J$, the open set $W$ and $K$ such that:
\begin{enumerate}
\item For $T\leq 1$, any path $\gamma\colon[0,\,T]\to \bar{W}$, which connects two distinct boundary components of $W$, satisfies
\[
\int_0^T\left\|\dot{\gamma}(t)-X_K(\gamma(t))\right\|^2\ dt >C_1.
\]
\item Any loop $\gamma\colon S^1\to \bar{W}$ satisfies
\[
\int_{S^1}\left\|\dot{\gamma}(t)-X_K(\gamma(t))\right\|^2\ dt >C_1.
\]
\end{enumerate}
\end{lemma}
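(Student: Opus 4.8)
The plan is to argue by contradiction in both cases simultaneously, using a compactness argument. Suppose no such constant $C_1>0$ exists. Then for part (i) we could find a sequence of times $T_j\le 1$ and paths $\gamma_j\colon[0,T_j]\to\bar W$, each connecting two distinct boundary components of $W$, with $\int_0^{T_j}\|\dot\gamma_j(t)-X_K(\gamma_j(t))\|^2\,dt\to 0$; similarly for part (ii) with loops. Reparametrizing the paths to a common interval $[0,1]$ (which only improves the integral bound, since $T_j\le 1$ and the reparametrization by an affine map scales the integrand favorably), we obtain maps into the compact manifold $\bar W$ whose ``defect'' $\dot\gamma_j-X_K(\gamma_j)$ goes to zero in $L^2$, hence the $\gamma_j$ have uniformly bounded derivatives in $L^2$ after absorbing the bounded term $X_K$. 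By the Arzel\`a--Ascoli theorem — using that $\bar W$ is compact and the $\gamma_j$ are equicontinuous by the $L^2$ bound on the defect together with boundedness of $X_K$ — a subsequence converges in $C^0$, and then weakly in $W^{1,2}$, to a limit path or loop $\gamma_\infty$ which must be an actual integral curve of $X_K$: $\dot\gamma_\infty=X_K(\gamma_\infty)$.

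Next I would extract the contradiction from the hypotheses on $K$. In case (ii), $\gamma_\infty\colon S^1\to\bar W$ is a closed integral curve of $X_K$, i.e.\ a one-periodic orbit of $K$ lying in $\bar W$, contradicting the assumption that $K$ has no one-periodic orbits in $\bar W$. In case (i), the limit $\gamma_\infty\colon[0,1]\to\bar W$ is an integral curve of $X_K$ whose endpoints lie on two \emph{distinct} boundary components of $W$ (this persists in the limit since each boundary component is a closed subset of $\bar W$ and the $\gamma_j(0)$, $\gamma_j(T_j)$ lie on fixed components along the subsequence, after passing to a further subsequence so that the pair of components is constant). But $K$ is constant on each boundary component, and $\bar W\setminus\bigcup(\text{boundary components})$ is an open set on which $X_K$ is tangent to level sets of $K$, so $K$ is constant along $\gamma_\infty$; since $\gamma_\infty(0)$ and $\gamma_\infty(1)$ lie on different level sets, this is impossible. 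Here I use that $X_K$ does not vanish near $\partial W$ — more precisely, that an orbit genuinely travels between components — which is guaranteed because a constant path would not connect two distinct components, so $\gamma_\infty$ is nonconstant, yet $\frac{d}{dt}K(\gamma_\infty)=dK(X_K)=0$.

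I expect the main obstacle to be the bookkeeping in the compactness step: ensuring the limit $\gamma_\infty$ genuinely retains the property of connecting two distinct boundary components rather than degenerating (for instance, both endpoints landing on the same component in the limit, or the path collapsing to a point on $\partial W$). This is handled by first passing to a subsequence for which $\gamma_j(0)$ lies in a fixed component $\Sigma_0$ and $\gamma_j(T_j)$ in a fixed component $\Sigma_1\ne\Sigma_0$ — possible since there are finitely many components — and then noting that $\Sigma_0,\Sigma_1$ being closed forces $\gamma_\infty(0)\in\Sigma_0$ and $\gamma_\infty(1)\in\Sigma_1$. A minor secondary point is justifying that the reparametrization to $[0,1]$ does not destroy the estimate; since $T_j\le 1$, reparametrizing $\gamma_j$ by $t\mapsto \gamma_j(T_j t)$ multiplies the integrand $\|\dot\gamma-X_K\|^2$ by $T_j^2\le 1$ and the measure by $T_j$, so the new integral is at most $T_j$ times the old one plus lower-order cross terms bounded using $\|X_K\|_{L^\infty(\bar W)}$; all of this stays $\to 0$, which is all we need. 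Everything else — equicontinuity, weak $W^{1,2}$ compactness, lower semicontinuity of the $L^2$ norm — is standard.
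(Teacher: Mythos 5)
Your compactness/contradiction strategy is viable, but as written it has two concrete gaps. First, the reparametrization step is not correct: if $\tilde\gamma_j(t)=\gamma_j(T_jt)$ on $[0,1]$, then $\dot{\tilde\gamma}_j(t)-X_K(\tilde\gamma_j(t))=T_j\bigl(\dot\gamma_j(T_jt)-X_K(\gamma_j(T_jt))\bigr)+(T_j-1)X_K(\tilde\gamma_j(t))$, and the second term has $L^2$-norm comparable to $(1-T_j)\|X_K\|_{L^\infty(\bar W)}$, which does \emph{not} tend to zero unless $T_j\to 1$. So your claim that the defect with respect to $X_K$ stays small after rescaling is false; what is true is that, after passing to a subsequence with $T_j\to T_\infty\in[0,1]$, the limit satisfies $\dot\gamma_\infty=T_\infty X_K(\gamma_\infty)$. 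This is repairable (a trajectory of $T_\infty X_K$ is a time-rescaled flow line, and if $T_\infty=0$ the limit is a constant, which cannot join two disjoint boundary components since they are a positive distance apart), but the bookkeeping must be redone and the degenerate case $T_\infty=0$ addressed explicitly.

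Second, and more seriously, your contradiction in case (i) rests on the endpoints of $\gamma_\infty$ lying on \emph{different level sets} of $K$. The lemma only assumes $K$ is constant on each boundary component; it does not assume that the constants on distinct components are distinct, so this step is not justified by the hypotheses. The observation you actually need (and which is also the crux of the paper's own argument) is flow-invariance of the boundary components: since $K$ is constant on a component $\Sigma$, $dK$ annihilates $T\Sigma$, and since $dK(X_K)=0$, the vector field $X_K$ (hence also $T_\infty X_K$) is tangent to $\Sigma$; therefore an integral curve starting on $\Sigma_0$ remains on $\Sigma_0$ for all time and can never reach a different component $\Sigma_1$. With these two repairs your argument closes; case (ii) is fine as stated, since the limit loop is a one-periodic orbit of $K$ in $\bar W$, excluded by hypothesis.

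For comparison, the paper does not argue by contradiction: it applies the Cauchy--Schwarz inequality to reduce to an $L^1$-bound, sets $\eta(t)=\varphi_K^{-t}(\gamma(t))$ so that $\dot\gamma-X_K(\gamma)=d\varphi_K^t(\eta)\dot\eta$, and bounds the integral below by $c\,d(\eta(0),\eta(T))$; this is at least $c$ times the distance between the two boundary components in case (i) (using the same flow-invariance), and at least $c\min_{q\in\bar W}d\bigl(q,\varphi_K^{-1}(q)\bigr)>0$ in case (ii) (using compactness and the absence of one-periodic orbits). That route produces an explicit constant $C_1$, whereas your compactness argument, once repaired, yields only existence of such a constant -- which is all the lemma requires.
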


This lemma is a generalization of lemmas in \cite{Us}, but the existence of similar lower bounds goes back to \cite{Sa}. The proof given in Section \ref{sec:Bewlowbound}, however, differs from the proofs in \cite{Sa,Us}. With $W=V\setminus\bar{U}$, this lemma implies the proposition if the area of $u^{-1}(W)$ is small. If this area is not small, we need the following lemma to relate the area of the domain and the energy for certain parts of a Floer trajectory.

\begin{lemma}[Usher's lemma]
\label{Usher2.3}
Let $W$ be a bounded open set with smooth boundary and at least two boundary components and let $K$ be an autonomous Hamiltonian on $\bar{W}$.
 Let $S$ be a connected subset of the cylinder $S^1\times\mathbb{R}$ and let $u\colon S\to \bar{W}$ satisfy the Floer equation \eqref{Floer} with Hamiltonian $K$. 
Assume that $u(\partial S)\subseteq \partial W$. If $u(S)$ intersects two distinct boundary components of $W$, then there exists a constant $C_2$, depending only on the domain $W$, the Hamiltonian $K$ and the complex structure $J$ on $W$, such that
\[
Area(S)+E(u)\geq C_2.
\]
\end{lemma}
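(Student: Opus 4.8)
The plan is to follow Usher's monotonicity argument (as in \cite{Us}), exploiting the fact that a Floer trajectory for an autonomous Hamiltonian $K$ differs from a $J$-holomorphic curve only by the Hamiltonian term. First I would rewrite the Floer equation: setting $v(s,t) = (\varphi_K^t)^{-1}(u(s,t))$ transforms a solution of \eqref{Floer} into a genuine $\tilde J_t$-holomorphic curve for the time-dependent almost complex structure $\tilde J_t = (\varphi_K^t)^* J$; alternatively, one works directly with $u$ and absorbs the lower-order term $\nabla K$ into the estimate, which is cleaner on the shell where $K$ is autonomous and the Hamiltonian vector field $X_K$ is tangent to the level sets bounding the components of $\partial W$. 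In either formulation the key point is that $u$ satisfies an equation of the form $\bar\partial_J u = (\text{zeroth-order term bounded in terms of } \|dK\|_{C^0(\bar W)})$, so standard monotonicity for (perturbed) pseudoholomorphic curves applies.

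The second step is the monotonicity inequality itself. Pick a point $p$ on $u(S)$ lying on one of the two boundary components, or better an interior point $p_0 = u(s_0,t_0) \in W$ at positive distance $r_0$ (depending only on $W$) from $\partial W$; since $u(S)$ meets both boundary components of $W$ and $W$ is connected with the two components separated by a fixed distance, the image $u(S)$ must exit every small metric ball around some such point, so there is a ball $B(p_0, r_0) \subseteq W$ with $u(S) \not\subseteq u^{-1}(B(p_0,r_0))$ and $u^{-1}(\partial B(p_0, r_0)) \ne \emptyset$. Monotonicity for the perturbed Cauchy--Riemann equation (with constants controlled by $J$, $W$, and $\|dK\|_{C^0}$) then gives a lower bound
\[
\int_{u^{-1}(B(p_0,r_0))} u^*\omega \;\geq\; c_0 r_0^2
\]
for a universal constant $c_0 > 0$, provided $r_0$ is below the injectivity-radius-type threshold determined by the geometry of $(\bar W, J)$. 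Finally I would convert $\int u^*\omega$ over this region into $Area(S) + E(u)$: on the region in question, $u^*\omega = \|\partial_s u\|^2\, ds\wedge dt + (\text{terms involving } X_K)$ by the Floer equation, so $\int_{u^{-1}(B)} u^*\omega \le E(u) + \|dK\|_{C^0(\bar W)} \cdot Area(u^{-1}(B)) \le E(u) + \|dK\|_{C^0(\bar W)} \cdot Area(S)$. Combining, $Area(S) + E(u) \ge C_2$ with $C_2 = c_0 r_0^2 / \max(1, \|dK\|_{C^0(\bar W)})$, which depends only on $W$, $K$, and $J$ as claimed.

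The main obstacle I expect is making the monotonicity argument genuinely uniform and honest about the boundary: since $u$ is only defined on a subset $S$ of the cylinder with $u(\partial S) \subseteq \partial W$, one cannot simply invoke the interior monotonicity inequality for closed or complete domains. The fix is to choose the center $p_0$ of the ball in the \emph{interior} of $W$ — which is possible precisely because $u(S)$ connects two different boundary components and hence passes through the interior — and take $r_0$ small enough that $B(p_0, r_0)$ stays in $W$; then the piece $u^{-1}(B(p_0, r_0))$ has boundary only on $u^{-1}(\partial B(p_0, r_0))$, where the standard monotonicity estimate for (perturbed) minimal-type surfaces with free boundary on a sphere applies. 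A secondary technical point is that $u$ need not be an embedding and $u^{-1}(B(p_0,r_0))$ need not be connected; but monotonicity only needs the component of $u^{-1}(B(p_0,r_0))$ containing $(s_0,t_0)$, and summing over components only helps. One should also note that $\|dK\|_{C^0(\bar W)}$ is finite and fixed since $K$ is a given smooth autonomous Hamiltonian on the compact set $\bar W$, so no compactness or limiting argument over Hamiltonians is needed here.
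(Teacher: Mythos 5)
Your overall strategy (a monotonicity estimate centered at a point of $u(S)$ lying a definite distance from $\partial W$) is the same basic idea as the paper's, and you even mention the correct reformulation in passing; but the monotonicity statement your main argument actually relies on is not valid, and it fails exactly in the regime the lemma is designed to control. You assert that a solution of the perturbed Cauchy--Riemann equation passing through the center of a ball $B(p_0,r_0)\subseteq W$ and exiting it satisfies $\int_{u^{-1}(B(p_0,r_0))}u^*\omega\geq c_0r_0^2$. This is false for Floer trajectories: writing $\partial_t u = X_K(u)+J\partial_s u$, one gets $u^*\omega=\big(\|\partial_s u\|^2-dK(\partial_s u)\big)\,ds\wedge dt$, so a piece of trajectory that crosses the ball essentially along the Hamiltonian flow (locally close to an orbit cylinder, $\partial_s u$ small) has arbitrarily small symplectic area even though its image crosses the ball; what is large there is the \emph{domain} area. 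Monotonicity in the presence of the zeroth-order term can only bound the combination (domain area)$+$(energy) from below, not $\int u^*\omega$ or $E(u)$ separately --- and that combined bound is precisely the assertion of the lemma, so your key step begs the question. (There is also a smaller slip in the final conversion: $\omega(\partial_s u,X_K)=-dK(\partial_s u)$ is bounded by $\|dK\|\,\|\partial_s u\|$, not by $\|dK\|$, so passing from $\int u^*\omega$ to $E(u)+\|dK\|\cdot Area$ needs Cauchy--Schwarz or Young's inequality and a change of constants; that part is repairable.)

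The clean way to make monotonicity produce exactly $Area+E$ --- and the route the paper takes, following Usher --- is the graph trick: since $K$ is autonomous, the graph $\tilde u\colon S\to S^1\times\mathbb{R}\times\bar W$ of the Floer trajectory is genuinely $\tilde J$-holomorphic for an almost complex structure tamed by $\tilde\omega=ds\wedge dt-dt\wedge dK+\omega$, and for every $S'\subseteq S$ one has $\int_{S'}\tilde u^*\tilde\omega=Area(S')+E(u|_{S'})$. One then fixes a separating hypersurface $\Sigma\subset W$, picks $z_0\in S$ with $u_0=u(z_0)\in\Sigma$ (this replaces your choice of an interior point a definite distance from $\partial W$), takes a small product ball $\tilde B\subseteq D\times B$ centered at $(z_0,u_0)$, notes that $\tilde u(\partial S)\subseteq\partial S\times\partial W$ misses $\tilde B$, applies Sikorav's monotonicity (Proposition 4.3.1(ii) in \cite{Si}) to $\tilde u$ inside $\tilde B$, and finally takes the infimum of the resulting constants over $u_0\in\Sigma$, which is positive by compactness of $\Sigma$. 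If you replace your ``perturbed monotonicity'' step by this graph argument (your alternative via the naturality transformation $v=(\varphi_K^t)^{-1}\circ u$ would still need a graph construction, since the resulting almost complex structure is $t$-dependent), the rest of your outline goes through.
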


This lemma is a generalization of Lemma 2.3 in \cite{Us}. We prove this lemma in Section~\ref{sec:bewUsher} and continue here with the proof of Proposition~\ref{Energybound}. Similarly to the special case in \cite{Us}, both lemmas are used with $W$ being the shell between two open sets to bound the energy of certain Floer trajectories away from zero. 

Pick two open sets $U'$ and $V'$ bounded by level sets of $K$ such that 
\[
U\subset U'\subset V'\subset V.
\]
Denote the loop $t\mapsto u(t,\,s)$ for fixed $s$ by  $\gamma_s(t)$ and consider the set
\[Z=\left\{s\in\mathbb{R}\ |\ \gamma_s \text{ intersects } V'\setminus U'\right\}.
\]

Then for  every $s\in Z$, we either have $\gamma_s\subseteq V\setminus U$ or $\gamma_s$ intersects one of the boundary components of $V\setminus U$. In the first case we can apply 
Lemma \ref{lowbound} $(ii)$ to the Hamiltonian $K$ and $W=V\setminus \bar{U}$.

In the second case, the path $\gamma_s$ also intersects one of the boundary components of $V'\setminus U'$ and we can apply Lemma \ref{lowbound} $(i)$ with $W$ taken to be one of the shells $V\setminus V'$ or $U'\setminus U$.
Denote by $C$ the minimum of the constants $C_1$ from Lemma~\ref{lowbound} for the shells $V\setminus U$, $V\setminus V'$ and $U'\setminus U$.

Then we have the following estimate for the energy of $u$:
\begin{eqnarray*}
E(u)&=&\int_{\mathbb{R}}\int_{S^1}\left\|\partial_s u\right\|^2\ dt\ ds\\
&\geq&\int_{Z}\int_{S^1}\left\|\partial_t u(s,\,t)-X_K(u(s,\,t))\right\|^2\ dt\ ds\\
&\geq&\int_Z C\ ds=C\ m_{Leb}(Z).
\end{eqnarray*}

If $m_{Leb}(Z)\geq C_2/2$, where $C_2$ is the constant from Lemma \ref{Usher2.3} for the shell $W=V'\setminus U'$, we have a lower bound $C C_2/2$ for the energy of $u$.

If $m_{Leb}(Z)<C_2/2$, we choose $S$ as one connected component of $u^{-1}(V'\setminus U')$, such that $u(S)$ intersects both boundary components. Since $u$ intersects $\partial U$ and $\partial V$, such a set $S$ exists and $S\subseteq Z\times S^1$. Then we have $Area(S)\leq m_{Leb}(Z)\leq C_2/2$ and $u(\partial S)\subseteq \partial (V'\setminus U')$.
Now Lemma \ref{Usher2.3} applies with $W=V'\setminus \bar{U'}$ and we find that
\[
E(u)\geq E(u|_{S})\geq C_2-Area(S)\geq C_2/2.
\]
Thus with $\eps=\min\{CC_2/2,\,C_2/2\}$ we have found a lower bound for the energy in either case.

\subsection{Proof of Lemma \ref{lowbound}}
\label{sec:Bewlowbound}

By the Schwarz inequality, we have
\begin{eqnarray*}
\int_0^T\left\|\dot{\gamma}(t)-X_K(\gamma(t))\right\|^2\ dt &\geq&\left(\int_0^T\left\|\dot{\gamma}(t)-X_K(\gamma(t))\right\|\ dt\right)^2
\end{eqnarray*}
 for $T\leq 1$ and it suffices to find a lower bound for the $L^1$-norm.

To that end, for a path $\gamma(t)$ in $\bar{W}$, we define the path $\eta(t)=\varphi_K^{-t}(\gamma(t))$. 
By the chain rule we have 
\begin{eqnarray*}
\dot\gamma(t)&=&d\varphi_K^t(\eta(t))\dot\eta(t)+\big(\frac{d}{dt}\varphi_K^t\big)(\eta(t))\\
&=&d\varphi_K^t(\eta(t))\dot{\eta}(t)+X_K(\gamma(t)). 
\end{eqnarray*}

Recall for part $(i)$ that we assume $K$ to be autonomous and constant on the boundary components of $W$. The two boundary components of $W$ are preserved under the flow. Since $\gamma$ connects two distinct boundary components of $W$, the same is true for $\eta$. Denote the distance of these boundary components by $\delta$.
Then we find the lower bound by the following calculation:
\begin{eqnarray*}
\int_0^T\left\|\dot{\gamma}(t)-X_K(\gamma(t))\right\|\ dt&=&\int_0^T \|d\varphi_K^t(\eta(t))\dot\eta(t)\| \ dt\\
&>&c\cdot \int_0^T \|\dot\eta(t)\| \ dt\\
&\geq&c\cdot d(\eta(0),\,\eta(T))\\
&\geq&c\cdot \delta.
\end{eqnarray*}
The constant $c$ is positive since $K$ is smooth and both $t$ and $\eta(t)$ are varying in compact sets and $K$ has no critical points in $\bar{W}$. 

Similarly, we find for part $(ii)$ 
\begin{eqnarray*}
\int_{S^1}\left\|\dot{\gamma}(t)-X_K(\gamma(t))\right\|\ dt&=&\int_0^1 \|d\varphi_K^t(\eta(t))\dot\eta(t)\| \ dt\\
&\geq&c\cdot d(\eta(0),\eta(1))\\
&=&c\cdot d\big(\gamma(0),\varphi_K^{-1}(\gamma(0))\big).
\end{eqnarray*}
As $\bar{W}$ is compact and $\varphi_K$ is continuous with no one-periodic orbits in $\bar{W}$, this distance is bounded away from zero.

Thus in both parts we have found a lower bound and we set $C_1$ to be the minimum of those bounds.

\subsection{Proof of Usher's lemma}
\label{sec:bewUsher}
For simplicity of notation, we assume that $W$ has exactly two boundary components.
Recall that we want to prove the existence of a lower bound for $Area(S)+E(u)$ which is independent of $u$. Since $u$ is a solution of the Floer equation, the graph $\tilde{u}\colon S\to S^1\times\mathbb{R}\times\bar{W}$ is a $\tilde{J}$-holomorphic curve for a certain almost complex structure $\tilde{J}$ which is tamed by $\tilde{\omega}=ds\wedge dt-dt\wedge dK+\omega$. (For the precise definition of $\tilde{J}$, see the proof of Lemma 2.3 in \cite{Us}). 
This almost complex structure $\tilde{J}$ depends only on the almost complex structure $J$ on $\bar{W}$ and on the Hamiltonian $K$ on $\bar{W}$.

For any subset $S'\subseteq S$, this definition of $\tilde{\omega}$ gives
\[
\int_{S'}\tilde{u}^{*}\tilde{\omega}=\int_{S'}\ ds\wedge dt + \int_{S'}\left|\frac{\partial u}{\partial s}\right|_{J_t}^2\ ds\ dt=Area(S')+E(u|_{S'}).
\]

Let $\Sigma$ be a closed hypersurface in $W$ which separates the two boundary components of $W$. By assumption, $u$ intersects both boundary components. Thus there exists a $z_0\in S$ such that $u_0=u(z_0)\in\Sigma$. 
We now choose a ball $B\subseteq W$ centered at $u_0$ and a disk $D\subseteq S^1\times \R$ with fixed radius centered at $z_0$.

Now we consider a ball $\tilde{B}$ centered at $(z_0,u_0)$ and contained in $D\times B$. Since the radius of $D$ is fixed, the radius of this ball $\tilde{B}$ depends only on the radius of $B$ and thus only on the open set $W$.
Then we define
\[
\tilde{S}=\{z\in S\ |\ \tilde{u}(z)\in\tilde{B}\}.
\]

By definition, the boundary of $\tilde{S}$ is mapped to the boundary of $\tilde{B}$. Indeed,  $\tilde{u}(\partial S)$ is contained in $(\partial S)\times(\partial W)$ and therefore not in $\tilde{B}\subseteq D\times B$.

Let us now view the graph of $u$ as a map $\tilde{u}\colon\tilde{S}\to\tilde{B}$. As $B$ is contained in $W$, where $K$ is fixed, the complex structure $\tilde{J}$ on $D\times B$ depends only on the ball $B$, the Hamiltonian $K|_{W}$ and the complex structure $J$ on $W$.

By definition of $\tilde{S}$, the center $(z_0,\ u_0)=\tilde{u}(z_0)$ of $\tilde{B}$ is contained in the image of $\tilde{u}$. Now $\tilde{u}$ is considered to be a $\tilde{J}$-holomorphic curve in $\tilde{B}$, which is passing through the center and has no boundary in the interior of $\tilde{B}$.
For this $\tilde{u}$, Proposition 4.3.1(ii) in \cite{Si} applies and we have $Area (\tilde{S})+E(u|_{\tilde{S}})\geq C_2(u_0)$. 

This constant $C_2(u_0)$ still depends on $u$, since the choice of the center $u_0$ of the ball $B$ depends on $u$. To obtain a constant that is independent of $u$, we take the infimum over all possible $u_0$ and define
\[
C_2=\inf \{C_2(u_0)\ |\ u_0\in\Sigma\}.
\]
Since the hypersurface $\Sigma$ is compact, this constant $C_2$ is positive and independent of $u$ with $Area(S)+E(u)\geq Area(\tilde{S})+E(u|_{\tilde{S}})\geq C_2>0$.

\section{Proof of  Theorem \ref{theorem}}
\label{sec:Beweis}
\subsection{Outline of the proof}
\label{sec:outline}

The key to proving Theorem \ref{theorem} is a geometrical description of \SDMs\ given in Proposition \ref{SDM geo}. In particular, we can assume that the \SDM\ is a constant orbit $p$ with trivial capping. Furthermore,  we can assume that $p$ is a strict local maximum of $H$ and that $H$ has arbitrarily small Hessian at $p$.

Then we use the squeezing method from \cite{BPS,Gi,GG1} and construct Hamiltonians $H_+$ and $H_-$ such that $H_-<H<H_+$.
It suffices to show that a linear homotopy from $H_+$ to $H_-$ induces a non-zero map between the filtered Floer homology groups of $H_\pm$ for the action interval in question, since this map factors through the filtered Floer homology of $H$. Then the Floer homology group of $H$ cannot be trivial.

As functions of the distance from $p$, the functions $H_+$ and $H_-$ are constructed similarly to the ones used in \cite{Gi,GG-gaps}; see Section \ref{sec:functions} for details.

For the Hamiltonians $H_\pm$ we use the direct sum decomposition from Proposition~\ref{hom decomp}. By construction of $H_\pm$ and Remark \ref{compatible}, this will be compatible with the limit construction of filtered Floer homology and the monotone homotopy map for a homotopy from $H_+$ to $H_-$.
To prove that the monotone homotopy map is non-zero, it suffices to show that the restriction to one of the summands is an isomorphism.

We will consider the summand $\HF_*(H_\pm,\,U)$ for a neighborhood $U$ of the \SDM\ $p$, as this depends only on the functions restricted to $U$ and the symplectic structure in $U$ and is independent of the ambient manifold. Thus we can view $U$ as an open set in any symplectic manifold and the theorem follows as in the symplectically asherical case in \cite{Gi}.

\begin{rmk}
This process of localizing the problem is fundamentally different from the localization in the definition of local Floer homology. 
Here we only fix the Hamiltonian on a shell $V\setminus U$ between two bounded open sets $U$ and $V$. Then we use the small action interval, and thus small energy of Floer trajectories, to ensure that the trajectories do not leave $V$ using the energy bounds from Section~\ref{sec:energy}.

For local Floer homology we do not directly restrict the action interval but fix the Hamiltonian outside an open set $U$ that only contains one isolated one-periodic orbit $\bar x$. Then we take a small non-degenerate perturbation of the Hamiltonian to split this one-periodic orbit up into non-degenerate periodic orbits. The actions of those are close to the action of $\bar x$ and thus the energy of Floer trajectories connecting them is small.
As the Hamiltonian is fixed outside $U$, this ensures that Floer trajectories between orbits in $U$ stay in $U$. Then the local Floer homology is defined by restricting the definition of Floer homology to $U$.
\end{rmk}

\subsection{Geometric characterization of \SDMs}
\label{sec:sdm}

In this section we state some geometric properties of \SDMs. The existence of a \SDM\ enters the proof of Theorem \ref{theorem} only via those properties.

For the formulation of the geometric characterization of a \SDM\ we first need to recall the definition of the norm of a tensor with respect to a coordinate system.
By definition, on a finite-dimensional vector space the norm $\left\|v\right\|_{\Xi}$ of a tensor $v$ with respect to a coordinate system $\Xi$ is the norm of $v$ with respect to the inner product for which $\Xi$ is an orthonormal basis. For a coordinate system $\xi$ on a manifold $M$ near a point $p$, the natural coordinate basis in $T_pM$ is denoted by $\xi_p$.

\begin{prop}[\cite{GG-gaps,GG-loc}]
\label{SDM geo}
Let $\bar{x}$ be a \SDM\ of a Hamiltonian $H$ and let $p=x(0)\in M$. Then there exists a sequence of contractible loops $\eta_i$ of Hamiltonian diffeomorphisms such that $x(t)=\eta_i^t(p)$, i.e each loop $\eta_i$ sends $p$ to $x$.
Furthermore, the Hamiltonians $K^i$ given by $\varphi_H^t=\eta_i^t\circ\varphi_{K^i}^t$ and the loops $\eta_i$ satisfy the following conditions:
\begin{itemize}
\item [\textbf{(K1)}] The point $p$ is a strict local maximum of $K_t^i$ for $t\in S^1$.
\item [\textbf{(K2)}] There exist symplectic bases $\Xi^i$ of $T_pM$ such that 
\[
\left\|d^2(K_t^i)_p\right\|_{\Xi^i}\rightarrow 0 \text{ uniformly in } t\in S^1.
\]
\item [\textbf{(K3)}] The loop $\eta_i^{-1}\circ \eta_j$ has identity linearization at $p$ for all $i$ and $j$ (i.e. for all $t\in S^1$ we have $d\big((\eta_i^t)^{-1}\circ \eta_j^t\big)_p=I$), and is contractible to $id$ in the class of such loops.
\end{itemize}
\end{prop}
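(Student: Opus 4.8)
The plan is to extract the geometric content of a \SDM\ from its defining analytic conditions $\Delta_H(\bar x)=0$ and $\HF_n(H,\bar x)\neq 0$, following the strategy of \cite{GG-loc,GG-gaps}. The starting point is the observation that the vanishing of the mean index together with non-triviality of the local Floer homology in the top degree $n$ forces strong rigidity on the linearized return map $d\varphi_H$ at $p=x(0)$: a non-degenerate perturbation argument shows that all Conley--Zehnder indices of orbits emerging from $\bar x$ are concentrated at $n$, which by the index--iteration properties of the mean index (see \cite{SZ}) means that $d\varphi_H$ at $p$ is, up to a loop correction, unipotent with the ``right'' signature, and in fact that $\bar x$ behaves like an iterated constant maximum with small Hessian. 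Concretely, I would first invoke the result of \cite{GG-loc} that a \SDM\ is, after passing to the linearized data, indistinguishable from a constant orbit which is a local maximum, and record that this already produces the loops $\eta_i$: these are the loops along which one ``transports'' the orbit $x$ back to the constant orbit at $p$.

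The construction of the $\eta_i$ goes as follows. Since $x$ is a contractible loop, choose a capping/homotopy realizing the contraction and let $\eta^t$ be a loop of Hamiltonian diffeomorphisms with $\eta^t(p)=x(t)$; this exists because the evaluation map from based loops of Hamiltonian diffeomorphisms to contractible loops through $p$ is surjective. Defining $K$ by $\varphi_H^t=\eta^t\circ\varphi_K^t$ makes $p$ a one-periodic orbit of $K$ with trivial capping, and the local Floer homology and mean index of $\bar x$ for $H$ equal those of the constant orbit $p$ for $K$ (the loop $\eta$ acts trivially on these invariants since it is contractible). The conditions $\Delta_K(p)=0$ and $\HF_n(K,p)\neq 0$ then say precisely, by the local analysis in \cite{GG-loc}, that $p$ is a strict local maximum of $K_t$ for each $t$ --- giving \textbf{(K1)} --- and that, after a symplectic change of coordinates, the Hessian $d^2(K_t)_p$ can be made arbitrarily small; repeating the construction with better and better coordinate systems yields the sequence $\Xi^i$, $K^i$, $\eta^i$ satisfying \textbf{(K2)}. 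The key technical input here is the normal form for a \SDM, which expresses $K$ near $p$ as a $C^2$-small autonomous function plus a contribution whose linearization at $p$ is controlled, so that rescaling coordinates shrinks the Hessian uniformly in $t$.

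The remaining point \textbf{(K3)} is the compatibility between different members of the sequence, and this is where I expect the main obstacle. One must choose the $\eta^i$ so that the ``difference'' loops $(\eta^t_i)^{-1}\circ\eta^t_j$ have identity linearization at $p$ and, moreover, are contractible to the identity \emph{within the class of loops with identity linearization at $p$}. The first half is arranged by insisting that all $\eta^i$ induce the same fixed linearized transport $d(\eta^t_i)_p\colon T_pM\to T_{x(t)}M$ --- i.e. one fixes once and for all a path of symplectic frames along $x$ and requires each $\eta^i$ to realize it --- so that the linearizations cancel. The second half, contractibility rel the linearization constraint, is more delicate: it requires understanding the homotopy type of the based loop space of $\mathrm{Ham}(M)$ relative to the subgroup of diffeomorphisms fixing $p$ with prescribed derivative there, and one uses that the relevant component is path-connected by a Moser-type deformation argument combined with the fact that the linearized data has been rigidified. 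I would handle this by exhibiting an explicit homotopy: since all the $\eta^i$ agree to first order at $p$ with a fixed loop, their difference lies in the subgroup of Hamiltonian loops that are tangent to the identity at $p$, and on a neighborhood of $p$ one can linearly interpolate the generating Hamiltonians while preserving the tangency condition, then cut off away from $p$. Verifying that this interpolation stays inside $\mathrm{Ham}$ and remains contractible as a loop is the careful part; it parallels the corresponding argument in \cite{GG-gaps,GG-loc}, to which I would ultimately refer for the full details, since the statement is quoted from there.
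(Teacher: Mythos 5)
First, note that the paper does not actually prove Proposition~\ref{SDM geo}: it is quoted from \cite{GG-gaps,GG-loc}, and the only substantive remark accompanying it is that (K1) and (K2) imply (K3) as a formal consequence, so there is no in-paper argument to match your sketch against; the comparison has to be with the cited proofs. Measured against those, your proposal has a genuine gap at its central step: you ``invoke the result of \cite{GG-loc} that a \SDM\ is \ldots indistinguishable from a constant orbit which is a local maximum'' and appeal to a ``normal form for a \SDM'' as the key technical input --- but that normal form \emph{is} the content of (K1)--(K2), so the argument is circular exactly where the work lies. Moreover, your intermediate claim that, after passing to $K$ via one loop $\eta$, the conditions $\Delta_K(p)=0$ and $\HF_n(K,p)\neq 0$ ``say precisely'' that $p$ is a strict local maximum of $K_t$ is false for a fixed $K$: the homological conditions do not force any particular representative $K$ to have a maximum at $p$; the whole point of the proposition is the existence of a \emph{sequence} $\eta_i$, $K^i$, $\Xi^i$ with Hessians tending to zero, and producing that sequence from $\Delta_H(\bar x)=0$ and $\HF_n(H,\bar x)\neq 0$ is the delicate part carried out in \cite{Gi,GG-loc}, which your outline never addresses.

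Your treatment of (K3) also deviates from, and is weaker than, the actual situation. You propose to rigidify the linearizations by fixing a path of symplectic frames along $x$ and requiring every $\eta_i$ to realize it; but the linearizations $d(\eta_i^t)_p$ are not free parameters --- they are tied to $d(\varphi_H^t)_p$ through $d(\varphi_H^t)_p=d(\eta_i^t)_p\, d(\varphi_{K^i}^t)_p$, and $d(\varphi_{K^i}^t)_p$ is only \emph{close} to the identity (its size is what (K2) controls), so one cannot simply impose that all $\eta_i$ have the same linearization while retaining (K1)--(K2). In \cite{GG-gaps,GG-loc}, as the paper records, (K3) is a formal consequence of (K1)--(K2), so no such auxiliary choice is needed. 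Finally, your proposed ``explicit homotopy'' for the contractibility statement --- linearly interpolating the generating Hamiltonians of the two loops and cutting off away from $p$ --- does not work as stated: an interpolation of generating Hamiltonians of two loops need not generate a \emph{loop} of Hamiltonian diffeomorphisms at intermediate times, so this does not define a homotopy within the class of loops with identity linearization at $p$. To repair the write-up you should either reproduce the actual argument of \cite{GG-loc} (nonvanishing of $\HF_n$ in the top degree plus zero mean index yields the sequence of loops and small-Hessian Hamiltonians) or state clearly that the proposition is being used as a black box from the references, as the paper itself does.
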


A proof of this proposition and also of the fact that this description is equivalent to the definition of \SDMs\ can be found in \cite{GG-gaps,GG-loc}. It is also shown there that the conditions (K1) and (K2) already imply (K3) as a formal consequence.

When the concept of \SDMs\ was introduced in~\cite{Hi} by Hingston and in the first formal definition given in \cite{Gi}, this characterization was used as a definition of \SDMs.

\begin{rmk}
The loops $\eta_i^{-1}\circ \eta_j$ are loops of Hamiltonian diffeomorphisms fixing $p$.
The construction in \cite{Gi} shows that the loops $\eta_i$ can be chosen such that $\eta_i^{-1}\circ \eta_j$ are supported in an arbitrarily small neighborhood of $p$.
\label{supp eta}
\end{rmk}

\subsection{The functions $H_+$ and $H_-$}
\label{sec:functions}

By Proposition \ref{SDM geo} above it suffices to prove the theorem for the function $K^1$ and the constant orbit $p$ with trivial capping as \SDM. We keep the notation $H$ for $K^1$. Fix a Darboux chart in a neighborhood $W$ of $p$ such that $p$ is a strict global maximum of $H$ on $W$. 
We also fix now an almost complex structure $J$ on $M$ that is compatible with $\omega$.

Let $U$ and $V$ be balls centered at $p$ and contained in $W$. 
We then construct the function $H_{+}$ and an auxiliary function $F$ to be of the form shown in Figure \ref{fig:functions}.

More concretely, we fix balls 
\[
B_{r_-}\subset B_{r_+}\subset B_r\subset U\subset V\subset B_R\subset B_{R_-}\subset B_{R_+}\Subset W.
\]

Then the function $H_+$ is defined to be radially symmetric around $p$ with the following properties:
\begin{itemize}
\item $H_+\geq H$ and $H_+\equiv c=H(p)$ on $B_{r_-}$.
\item On $B_{r_+}\setminus B_{r_-}$ the function $H_+$ is monotone decreasing.
\item On $B_r\setminus B_{r_+}$ the function is constant.
\item In the shell $B_R\setminus B_r$ the function is monotone increasing, linear as a function of the square of the distance from $p$ with small slope $\alpha$ on $V\setminus U$ such that there are no one-periodic orbits in $V\setminus B_r$.
\item The function $H_+$ is again constant on $B_{R_-}\setminus B_R$ with a value less than $c$.
\item It is monotone increasing on $B_{R_+}\setminus B_{R_-}$
\item Outside $B_{R_+}$ the function $H_+$ is constant and equal to its maximum.
\end{itemize}

\begin{figure}
\centering
\scalebox{0.4}[0.2]{
   \includegraphics{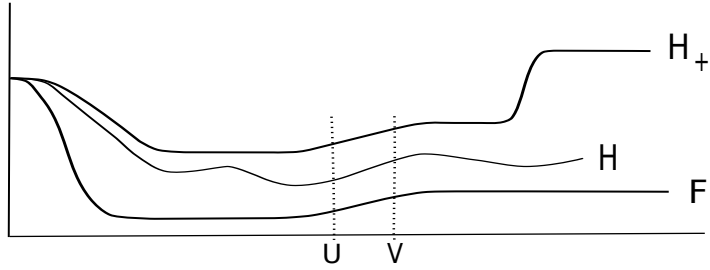}}
\caption{The functions $H_+$ and $F$ as functions of the distance from $p$}
\label{fig:functions}
\end{figure}   

Inside $B_r$ and outside $B_R$ the construction of $H_+$ is exactly as in \cite{Gi}. The linear part in $V\setminus U$ ensures that Proposition \ref{hom decomp} applies. 

More concretely, we first choose some small constant $\alpha_0>0$ such that $\alpha_0/\pi$ is irrational. Then we fix the Hamiltonian $H_+$ on $B_r$ and pick $\eps>0$ smaller than the energy bound from Proposition~\ref{hom decomp} for a Hamiltonian linear with slope $\alpha_0$ on $V\setminus U$.
Using these choices we take a sufficiently large order of iteration $k$ as in \cite{Gi,GG-gaps}. 
Furthermore, we now fix $H_+$ outside $B_r$ with slope $\alpha=\alpha_0/k$ on $V\setminus U$.
We thus have the direct sum decomposition of filtered Floer homology by Proposition~\ref{hom decomp} for $H_+^{(k)}$. At this point we choose some $\delta_k\in(0,\,\eps/2)$, depending on $k$, to ensure that the action intervals intervals $(kc+\delta_k,\,kc+\eps)$ and $(kc-\delta_k,\,kc+\delta_k)$ are sufficiently small for the direct sum decomposition.

Let us now turn to the construction of $H_-$ using the existence of a \SDM. The geometrical characterization of \SDMs\ in Proposition \ref{SDM geo} and Remark \ref{supp eta} imply that we have
\begin{itemize}
\item a loop $\eta^t$ of Hamiltonian diffeomorphisms fixing $p$, which is supported in~$U$,
\item and a system of coordinates $\xi$ on a neighborhood $W$ of $p$
\end{itemize}
such that the Hamiltonian $K$ generating the flow $\eta^{-t}\circ\varphi_H^t$ has a strict local maximum at $p$ and $\max_t\left\|d^2(K_t)_p\right\|_{\xi_p}$ is sufficiently small. The loop $\eta$ is contractible in the class of loops having identity linearization at $p$.
Let $G_s^t$ be a Hamiltonian generating such a homotopy $\eta_s^t$, normalized by $G_s^t(p)\equiv 0$. We then normalize $K$ by the additional requirement that $K_t(p)\equiv c$ (or equivalently that $H=G\# K$).

Then there exists a function $F$, depending on the coordinate system $\xi$, such that
\begin{itemize}
\item $\left\|d^2F_p\right\|_{\xi_p}$ is sufficiently small,
\item $F\leq K$ and $F(p)=c=H(p)$ is the global maximum of $F$.
\end{itemize}
To be more precise, in $B_r$ the function $F$ is a bump function centered at $p$, constant outside $B_R$ and differs from $H_+$ only by a constant on $V\setminus U$. The last condition is necessary to have the direct sum decompositions of the filtered Floer homology groups of $H_+$ and $F$ be compatible with the monotone homotopy map for a linear homotopy from $H_+$ to $F$.

Then $F^s=G^s\# F\leq H_+$ is an isospectral homotopy with $F^1=F$, i.e., a homotopy such that the action spectrum $\mathcal{S}(F^s)$ is independent of $s$.
We define the function $H_-$ by
\[
H_-:=G^0\# F\leq G^0\# K=H.
\]
Since $\eta$ is supported in $U$, the function $G^0$ is constant outside $U$. Then $H_-$ differs from $F$ and $H_+$ only by the constant value of $G$ on $\bar V\setminus U$. Therefore we also have the direct sum decomposition from Proposition \ref{hom decomp} for $H_-$. It is compatible with the homomorphism induced by the homotopy $F^s$ and the monotone homotopy map for a homotopy from $H_+$ to $H_-$.

\subsection{The Floer homology of $H_{\pm}$ and the monotone homotopy map}
\label{sec:HF}
In the symplectically aspherical case, the filtered Floer homology groups for $F$ and $H_+$ in the action intervals in question have been calculated in \cite{Gi,GG1}. 
Using this, we obtain an isomorphism
\[
\mathbbm{Z}_2\cong \text{HF}_{n+1}^{(kc+\delta_k,\,kc+\epsilon)}(H_+^{(k)},U)\to \text{HF}_{n+1}^{(kc+\delta_k,\,kc+\epsilon)}(F^{(k)},U)\cong\mathbbm{Z}_2
\]
by the same argument as in \cite{Gi}, since this summand behaves exactly like the filtered Floer homology in the symplectically aspherical case.
Then we have the commutative diagram
\begin{displaymath}
\xymatrix{
\ar[d]_{\Psi}\HF_{n+1}^{(kc+\epsilon,\,kc+\delta_k)}(H_+^{(k)},U)\ar[rd]^{\cong}&\\
\HF_{n+1}^{(kc+\epsilon,\,kc+\delta_k)}(H_-^{(k)},U)\ar[r]^{\cong}&\HF_{n+1}^{(kc+\epsilon,\,kc+\delta_k)}(F^{(k)},U)
}
\end{displaymath}
where  the horizontal map is induced by the isopsectral homotopy $F^s$ and the other maps are monotone homotopy maps. As in the symplectically aspherical case, the isospectral homotopy induces an isomorphism in this summand of the filtered Floer homology.
The commutativity is established the same way as in the symplectically aspherical case. 
(Observe that it is essential to use the "localized" Floer homology for all Hamiltonians in the diagram. For the full filtered Floer homology groups, the standard argument for the commutativity of the analogous diagram does not apply in the case of a symplectically irrational manifold.)
The diagonal map is an isomorphism by the same argument as in \cite{Gi} using the long exact sequence of filtered Floer homology to go over to the action interval $(kc-\delta_k,kc+\delta_k)$.
By the commutativity of this diagram, the map $\Psi$ is also an isomorphism. Thus the monotone homotopy map
\[
\HF_{n+1}^{(kc+\delta_k,\,kc+\epsilon)}(H_+^{(k)})\to \HF_{n+1}^{(kc+\delta_k,\,kc+\epsilon)}(H_-^{(k)})
\]
 is non-zero and this map factors through the Floer homology group of $H$, which we want to show to be non-trivial. This proves Theorem~\ref{theorem}.


\begin{thebibliography}{XXXX}


\bibitem[BPS]{BPS}P. Biran, L. Polterovich, D. Salamon,
Propagation in Hamiltonian dynamics and relative symplectic homology,
\emph{Duke Math. J.}, \textbf{119} (2003), 65-118.

\bibitem[Co]{Co}C.C. Conley, 
Lecture at the University of Wisconsin, April 6, 1984.

\bibitem[CZ]{CZ}C.C. Conley, E. Zehnder, 
Morse-type index theory for flows and periodic solutions for 
Hamiltonian equations, 
\emph{Comm. Pure Appl. Math.} \textbf{37} (1984), 207-253. 
 
\bibitem[FH]{FrHa}J. Franks, M. Handel, 
Periodic points of Hamiltonian surface diffeomorphisms, 
\emph{Geom. Topol.}, \textbf{7} (2003), 713-756.

\bibitem[Gi]{Gi}V.L. Ginzburg, 
The Conley Conjecture, 
\emph{Ann. of Math.}, \textbf{172} (2010), 1127-1180.

\bibitem[GG1]{GG1}V.L. Ginzburg, B. G\"urel, 
Relative Hofer-Zehnder capacity and periodic orbits in twisted cotangent bundles, 
\emph{Duke math. J.}, \textbf{123} (2004), 1-47.

\bibitem[GG2]{GG-gaps}V.L. Ginzburg, B.Z. G\"urel, 
Action and index spectra and periodic orbits in Hamiltonian dynamics, 
\emph{Geom. Topol.}, \textbf{13} (2009), 2745-2805.

\bibitem[GG3]{GG-loc}V.L. Ginzburg, B.Z. G\"urel, 
Local Floer homology and the action gap, 
\emph{J. Sympl. Geom.}, \textbf{8} (2010), 323-357.

\bibitem[Hi]{Hi}N. Hingston, 
Subharmonic solutions of Hamiltonian equations on tori, 
\emph{Ann. of Math.}, \textbf{170} (2009), 529-560. 

\bibitem[Sa]{Sa}D.A. Salamon, 
Lectures on Floer homology, 
in \emph{Symplectic Geometry and Topology}, Eds.: Y. Eliashberg and L. Traynor, 
IAS/Park City Mathematics series, \textbf{7} (1999), pp. 143-230.

\bibitem[SZ]{SZ}D. Salamon, E. Zehnder, 
Morse theory for periodic solutions of Hamiltonian systems and the Maslov index, 
\emph{Comm. Pure Appl. Math.}, \textbf{45} (1992), 1303-1360.

\bibitem[Si]{Si}J.-C. Sikorav, 
Some properties of holomorphic curves in almost complex manifolds, 
in \emph{Holomorphic Curves in Symplectic Geometry}, 
Progr. Math., \textbf{117}, 1993, BirkhŠuser, Basel, pp. 165-189. 

\bibitem[Us]{Us}M. Usher, 
Floer homology in disk bundles and symplectically twisted geodesic flows, 
\emph{J.~Mod. Dyn.}, \textbf{3} (2009), 61-101.



\end{thebibliography}
\end{document}